\title{Complete solution of tropical vector inequalities using matrix sparsification}
\author{N. Krivulin\thanks{Faculty of Mathematics and Mechanics, Saint Petersburg State University, 28 Universitetsky Ave., St.~Petersburg, 198504, Russia, 
nkk@math.spbu.ru.}
\thanks{This work was supported in part by the Russian Foundation for Basic Research (grant No. 20-010-00145).}}
\date{}
\newtheorem{theorem}{Theorem}
\newtheorem{lemma}[theorem]{Lemma}
\newtheorem{corollary}[theorem]{Corollary}
\theoremstyle{definition}
\newtheorem{example}{Example}
\begin{document}

\maketitle

\begin{abstract}
We examine the problem of finding all solutions of two-sided vector inequalities given in the tropical algebra setting, where the unknown vector multiplied by known matrices appears on both sides of the inequality. We offer a solution that uses sparse matrices to simplify the problem and to construct a family of solution sets, each defined by a sparse matrix obtained from one of the given matrices by setting some of its entries to zero. All solutions are then combined to present the result in a parametric form in terms of a matrix whose columns form a complete system of generators for the solution. We describe the computational technique proposed to solve the problem, remark on its computational complexity and illustrate this technique with numerical examples.
\\

\textbf{Key-Words:} tropical semifield, tropical two-sided inequality, matrix sparsification, complete solution, backtracking.
\\

\textbf{MSC (2010):} 15A80, 15A39, 65F50
\end{abstract}

\section{Introduction}

The problem of solving two-sided vector inequalities in the tropical algebra setting (where the unknown vector multiplied by known matrices appears on both sides of the inequality) occurs in a variety of contexts, from geometry of tropical polyhedral cones \cite{Zimmermann1977General,Allamigeon2011Number,Gaubert2009Tropical} to mean payoff games \cite{Akian2012Tropical,Gaubert2012Tropical}. In its general form, the two-sided inequality is represented as $\bm{A}\bm{x}\leq\bm{B}\bm{x}$, where $\bm{A}$ and $\bm{B}$ are given matrices, $\bm{x}$ is the unknown vector, and the matrix-vector multiplication is interpreted in terms of a tropical semifield (a semiring with idempotent addition and invertible multiplication).

The problem of solving the two-sided inequality is closely related to the solution of the two-sided equation $\bm{A}\bm{x}=\bm{B}\bm{x}$, which is equivalent to two opposite two-sided inequalities. Both problems have no known polynomial-time solution, and hence are considered hard to solve. The available solutions for these inequality and equation comprise various algorithmic procedures and computational schemes developed during the last decades in a range of works, including \cite{Butkovic1984Elimination,Cuninghamegreen2003Equation,Butkovic2006Strongly,Wagneur2009Tropical,Lorenzo2011Algorithm,Sergeev2011Basic,Gaubert2012Tropical,Jones2019Twosided}. Existing solutions of the two-sided inequality are based on elimination techniques that successively simplify the problem by eliminating scalar inequalities \cite{Butkovic1984Elimination}, discrete algorithms that examine subsets of indices of rows and columns of the matrices involved \cite{Wagneur2009Tropical,Sergeev2011Basic}, reduction procedures that transform the problem into a set of problems of lower dimension \cite{Lorenzo2011Algorithm}, and other approaches.

Although the number of algorithmic methods continues to grow, the derivation of direct, analytical solutions of two-sided inequalities and equations is still a challenging problem. A complete solution in closed form is known only for the inequality $\bm{A}\bm{x}\leq\bm{x}$, which is a special case of the two-sided inequality where the matrix on the right-hand side is reduced as $\bm{B}=\bm{I}$ \cite{Krivulin2015Multidimensional,Krivulin2015Extremal} (see also \cite{Sergeev2009Multiorder,Elsner2010Maxalgebra,Sergeev2011Basic}). Application of this solution in combination with algorithmic techniques seems to be promising to handle the inequality in the general case.

As an example of this combined approach, one can consider the solution based on matrix sparsification techniques in \cite{Krivulin2017Algebraic,Krivulin2018Complete}. Specifically in \cite{Krivulin2018Complete} a complete solution is derived for the inequality with a reduced left-hand side in the form $\bm{x}\leq\bm{B}\bm{x}$, which replaces this inequality by a collection of inequalities with the reduced right-hand side, each solvable in explicit form, and then combines the solutions into one.

In this paper we extend the above solution of the inequality $\bm{x}\leq\bm{B}\bm{x}$ to handle the general two-sided inequality $\bm{A}\bm{x}\leq\bm{B}\bm{x}$. We follow an approach which uses sparse matrices to construct a family of solution subsets, each defined by a sparse matrix obtained from the matrix $\bm{B}$ by setting some of its entries to zero. All solutions are then combined to present the result in a parametric form in terms of a matrix whose columns compose a complete system of generators for the solution.

Since the brute-force production of the subsets results in exponential growth of the number of subsets to examine, we propose a backtracking algorithm that produces the subsets in an economic way to avoid excess computation.

The paper is organized as follows. Section~\ref{S-PDNR} presents a short introduction into the tropical algebra to provide an overview of the basic facts, symbols and results used in the subsequent sections. In Section~\ref{S-TSI} we formulate the problem and make some observations on the solutions. The main result is included in Section~\ref{S-CSTSI}, which provides a complete solution of the two-sided inequality. In Section~\ref{S-CIS} we discuss the computational implementation of the solution, and describe a procedure of generating solution sets. Section~\ref{S-NE} offers numerical examples to illustrate the results.

\section{Preliminary definitions, notation and results}
\label{S-PDNR}

We start with a brief overview of basic definitions, notation and preliminary results of tropical algebra, which underlie the solutions presented below. For further details one can consult, e.g., \cite{Baccelli1993Synchronization,Cuninghamegreen1994Minimax,Kolokoltsov1997Idempotent,Golan2003Semirings,Heidergott2006Maxplus,Gondran2008Graphs,Butkovic2010Maxlinear}.

\subsection{Idempotent semifield}

Consider a nonempty set $\mathbb{X}$ that is closed under addition $\oplus$ and multiplication $\otimes$, and has zero $\mathbb{0}$ and one $\mathbb{1}$ as the neutral elements of the operations $\oplus$ and $\otimes$. It is assumed that $(\mathbb{X},\oplus,\mathbb{0})$ is a commutative idempotent monoid, $(\mathbb{X}\setminus\{\mathbb{0}\},\oplus,\mathbb{1})$ is an Abelian group, and multiplication distributes over addition. The system $(\mathbb{X},\oplus,\otimes,\mathbb{0},\mathbb{1})$ is referred to as the idempotent semifield.

Integer powers specify repeated multiplication: $\mathbb{0}^{p}=\mathbb{0}$, $x^{0}=\mathbb{1}$, $x^{p}=x\otimes x^{p-1}$ and $x^{-p}=(x^{-1})^{p}$, where $x^{-1}$ is the inverse of $x$, for any nonzero $x\in\mathbb{X}$ and natural $p$. The integer powers are assumed to extend to powers with rational exponents. In what follows the multiplication symbol $\otimes$ is omitted to save writing.

The idempotent addition induces a partial order on $\mathbb{X}$ such that $x\leq y$ if and only if $x\oplus y=y$. With respect to this order, the addition possesses the extremal properties (the majority law of addition) in the form of the inequalities $x\leq x\oplus y$ and $y\leq x\oplus y$ satisfied for any $x,y\in\mathbb{X}$. Furthermore, addition and multiplication are isotone which means that the inequality $x\leq y$ yields $x\oplus z\leq y\oplus z$ and $xz\leq yz$ for any $z$. The inversion is antitone: $x\leq y$ results in $x^{-1}\geq y^{-1}$ for $x,y\ne\mathbb{0}$. Finally, the inequality $x\oplus y\leq z$ is equivalent to the pair of inequalities $x\leq z$ and $y\leq z$. The partial order is assumed to extend to a total order to make $\mathbb{X}$ linearly ordered.

An example of idempotent semifield under consideration is the real semifield $\mathbb{R}_{\max,+}=(\mathbb{R}\cup\{-\infty\},\max,+,-\infty,0)$ which is often called $(\max,+)$-algebra. In this semifield we have the operations defined as $\oplus=\max$ and $\otimes=+$, and the neutral elements as $\mathbb{0}=-\infty$ and $\mathbb{1}=0$. Furthermore, the inverse $x^{-1}$ of $x\in\mathbb{R}$ coincides with the opposite number $-x$ in standard arithmetic. The power $x^{y}$ corresponds to the arithmetic product $xy$ which is defined for all $x,y\in\mathbb{R}$. Finally, the order induced by idempotent addition agrees with the natural linear order on $\mathbb{R}$.

\subsection{Matrices and vectors}

The set of matrices with $m$ rows and $n$ columns over $\mathbb{X}$ is denoted by $\mathbb{X}^{m\times n}$. A matrix with all entries equal to $\mathbb{0}$ is the zero matrix denoted by $\bm{0}$. A matrix is called \textit{row-regular} if it has no rows with all entries equal to $\mathbb{0}$. 

The addition and multiplication of conforming matrices, and multiplication of matrices by scalars follow the standard rules where the arithmetic addition and multiplication are replaced by the scalar operations $\oplus$ and $\otimes$.

For any nonzero matrix $\bm{A}=(a_{ij})\in\mathbb{X}^{m\times n}$, the multiplicative inverse transpose (or the conjugate \cite{Cuninghamegreen1976Projections,Cuninghamegreen1994Minimax}) is the matrix $\bm{A}^{-}=(a_{ij}^{-})\in\mathbb{X}^{n\times m}$ where $a_{ij}^{-}=a_{ji}^{-1}$ if $a_{ji}\ne\mathbb{0}$, and $a_{ij}^{-}=\mathbb{0}$ otherwise. 

Properties of scalar operations with respect to the order relations are extended to the matrix operations where the relations are interpreted componentwise.

A square matrix with $\mathbb{1}$ on the diagonal and $\mathbb{0}$ elsewhere is the identity matrix denoted by $\bm{I}$. The non-negative integer powers of a square nonzero matrix $\bm{A}$ are defined in the usual way: $\bm{A}^{0}=\bm{I}$ and $\bm{A}^{p}=\bm{A}\bm{A}^{p-1}$ for any natural $p$.

The trace of a matrix $\bm{A}=(a_{ij})\in\mathbb{X}^{n\times n}$ is given by $\mathop\mathrm{tr}\bm{A}=a_{11}\oplus\cdots\oplus a_{nn}$. The trace possesses usual properties, specifically it is invariant under cyclic permutations: $\mathop\mathrm{tr}(\bm{A}\bm{B})=\mathop\mathrm{tr}(\bm{B}\bm{A})$ for any matrices $\bm{A}$ and $\bm{B}$ of compatible sizes.

Any matrix that consists of one column (row) forms a column (row) vector. A vector with all elements equal to $\mathbb{0}$ is the zero vector. For the sake of simplicity, the zero vector is denoted by the same symbol as the zero matrix $\bm{0}$. Any vector $\bm{x}$ without zero elements is called \textit{regular}, which can be expressed as $\bm{x}>\bm{0}$.

All vectors below are considered column vectors unless transposed. The set of column vectors over $\mathbb{X}$ with $n$ elements is denoted by $\mathbb{X}^{n}$.  

A row-regular matrix that has exactly one nonzero entry in each row is called \textit{strictly row-monomial} \cite{Clifford1961Algebraic}. The next statement describes properties of these matrices.
\begin{lemma}
If a matrix $\bm{A}$ is strictly row-monomial, then $\bm{A}^{-}\bm{A}\leq\bm{I}\leq\bm{A}\bm{A}^{-}$.
\end{lemma}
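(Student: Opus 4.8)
The plan is to verify both matrix inequalities entrywise, exploiting the defining feature of a strictly row-monomial matrix: each row of $\bm{A}=(a_{ij})\in\mathbb{X}^{m\times n}$ contains exactly one nonzero entry. I would denote by $\sigma(i)$ the column index of that unique nonzero entry in row $i$, so that $a_{i\sigma(i)}\ne\mathbb{0}$ while $a_{ij}=\mathbb{0}$ for all $j\ne\sigma(i)$. Recalling the definition of the conjugate, the entries of $\bm{A}^{-}=(a_{ij}^{-})$ satisfy $a_{ij}^{-}=a_{ji}^{-1}$ whenever $a_{ji}\ne\mathbb{0}$, and $a_{ij}^{-}=\mathbb{0}$ otherwise.

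For the right inequality $\bm{I}\leq\bm{A}\bm{A}^{-}$, I would compute the $(i,k)$ entry $(\bm{A}\bm{A}^{-})_{ik}=\bigoplus_{j}a_{ij}a_{kj}^{-1}$, where the sum effectively runs over those $j$ with $a_{ij}\ne\mathbb{0}$ and $a_{kj}\ne\mathbb{0}$. On the diagonal $i=k$ the only surviving term corresponds to $j=\sigma(i)$ and equals $a_{i\sigma(i)}a_{i\sigma(i)}^{-1}=\mathbb{1}$, so $(\bm{A}\bm{A}^{-})_{ii}=\mathbb{1}$. Off the diagonal every entry is at least $\mathbb{0}$, since $\mathbb{0}$ is the least element of $\mathbb{X}$. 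Comparing with the identity matrix entrywise then gives $\bm{A}\bm{A}^{-}\geq\bm{I}$.

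For the left inequality $\bm{A}^{-}\bm{A}\leq\bm{I}$, I would similarly compute $(\bm{A}^{-}\bm{A})_{kl}=\bigoplus_{i}a_{ik}^{-1}a_{il}$, the sum taken over those $i$ with $a_{ik}\ne\mathbb{0}$. The diagonal entries are idempotent sums of terms $a_{ik}^{-1}a_{ik}=\mathbb{1}$, hence equal to $\mathbb{1}$ when column $k$ carries a nonzero entry and to $\mathbb{0}$ otherwise; in either case they do not exceed $\mathbb{1}$. The crucial observation is that the off-diagonal entries vanish: for $k\ne l$ a term $a_{ik}^{-1}a_{il}$ can be nonzero only if both $a_{ik}\ne\mathbb{0}$ and $a_{il}\ne\mathbb{0}$, which is impossible because row $i$ has a single nonzero entry. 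Thus every off-diagonal entry equals $\mathbb{0}$, and the entrywise comparison yields $\bm{A}^{-}\bm{A}\leq\bm{I}$.

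I would expect no serious obstacle here, as the argument reduces to careful bookkeeping of which entries of the two products are nonzero. The only point demanding attention is the off-diagonal part of $\bm{A}^{-}\bm{A}$, where the row-monomial structure is precisely what forces the products to cancel; the diagonal computations and the trivial bound $\mathbb{0}\leq x$ dispose of the remaining cases.
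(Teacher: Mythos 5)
Your proof is correct and takes essentially the same approach as the paper: both arguments verify the two inequalities entrywise, using the single nonzero entry per row to force the off-diagonal entries of $\bm{A}^{-}\bm{A}$ to vanish (the paper phrases this as a contradiction with row-monomiality) and the diagonal of $\bm{A}\bm{A}^{-}$ to equal $\mathbb{1}$ by row-regularity. Your explicit bookkeeping with the index map $\sigma(i)$, including the case distinction between zero and nonzero columns for the diagonal of $\bm{A}^{-}\bm{A}$, is just a more formal rendering of the paper's row-by-row argument.
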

\begin{proof}
To verify the first inequality $\bm{A}^{-}\bm{A}\leq\bm{I}$, we show that it holds for each pair of rows of the matrices on both sides. Consider the matrix $\bm{A}^{-}$, and note that each row with all entries equal to $\mathbb{0}$, if it exists in $\bm{A}^{-}$, yields the same zero row in $\bm{A}^{-}\bm{A}$ on the left-hand side, which is trivially less than any row on the right.

Next, we check that every row of $\bm{A}^{-}$ with nonzero entries produces a pair of coinciding rows on both sides of the first inequality. The multiplication of such row by the column of the same number in $\bm{A}$ sets the diagonal entry of this row in $\bm{A}^{-}\bm{A}$ to $\mathbb{1}$ to make it equal to the corresponding entry in $\bm{I}$. Moreover, this row must have the other entries equal to $\mathbb{0}$ as in $\bm{I}$, since otherwise the matrix $\bm{A}$ is to have more than one nonzero entries in a row, which contradicts that $\bm{A}$ is row-monomial.

Considering that for any row-regular matrix $\bm{A}$, all diagonal entries in $\bm{A}\bm{A}^{-}$ are equal to $\mathbb{1}$, we conclude that the second inequality $\bm{I}\leq\bm{A}\bm{A}^{-}$ holds as well.
\end{proof}

Finally, note that if a matrix $\bm{A}$ is strictly row-monomial, then the inequality $\bm{A}\bm{x}\geq\bm{y}$ where $\bm{x}$ and $\bm{y}$ are vectors is equivalent to $\bm{x}\geq\bm{A}^{-}\bm{y}$. Indeed, we can multiply the first inequality by $\bm{A}^{-}$ on the left to obtain $\bm{x}\geq\bm{A}^{-}\bm{A}\bm{x}\geq\bm{A}^{-}\bm{y}$, which yields the second. At the same time, the multiplication of the second inequality by $\bm{A}$ on the left produces the first inequality as $\bm{A}\bm{x}\geq\bm{A}\bm{A}^{-}\bm{y}\geq\bm{y}$.

\subsection{Linear dependence}

A vector $\bm{b}\in\mathbb{X}^{m}$ is \textit{linearly dependent} on vectors $\bm{a}_{1},\ldots,\bm{a}_{n}\in\mathbb{X}^{m}$ if there exist scalars $x_{1},\ldots,x_{n}\in\mathbb{X}$ such that $\bm{b}=x_{1}\bm{a}_{1}\oplus\cdots\oplus x_{n}\bm{a}_{n}$. A vector $\bm{b}$ is collinear with $\bm{a}$ if $\bm{b}=x\bm{a}$ for some scalar $x$.

To test the linear dependence of vectors, various formal criteria \cite{Cuninghamegreen1979Minimax,Cuninghamegreen2004Bases,Butkovic2010Maxlinear} are used based on existence conditions for solutions of the equation $\bm{A}\bm{x}=\bm{b}$ where $\bm{A}$ is a matrix built from the vectors $\bm{a}_{1},\ldots,\bm{a}_{n}$. Specifically, the following result \cite{Krivulin2006Solution} (see also \cite{Cuninghamegreen1994Minimax}) offers a simple criterion, which requires no more than $O(mn)$ operations.
\begin{lemma}
\label{L-AbAb1}
A vector $\bm{b}$ is linearly dependent on vectors $\bm{a}_{1},\ldots,\bm{a}_{n}$ if and only if the condition $(\bm{A}(\bm{b}^{-}\bm{A})^{-})^{-}\bm{b}=\mathbb{1}$ holds with matrix $\bm{A}=(\bm{a}_{1},\ldots,\bm{a}_{n})$. 
\end{lemma}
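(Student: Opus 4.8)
The plan is to recognize that linear dependence of $\bm{b}$ on $\bm{a}_{1},\ldots,\bm{a}_{n}$ is, by definition, exactly the solvability of the vector equation $\bm{A}\bm{x}=\bm{b}$ with $\bm{A}=(\bm{a}_{1},\ldots,\bm{a}_{n})$, and then to reduce this solvability to the verification of a single scalar identity. The bridge between the two is a residuation (greatest-subsolution) argument.

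First I would establish that the vector $\bm{x}^{*}=(\bm{b}^{-}\bm{A})^{-}$ is the greatest solution of the one-sided inequality $\bm{A}\bm{x}\leq\bm{b}$. Writing the inequality entrywise as $\bigoplus_{j}a_{ij}x_{j}\leq b_{i}$ and using that a tropical sum is bounded by $b_{i}$ precisely when each summand is, this splits into the scalar bounds $x_{j}\leq a_{ij}^{-1}b_{i}$ for all admissible $i$; taking the tightest such bound over $i$ and rewriting it through the conjugate yields exactly the $j$-th entry of $(\bm{b}^{-}\bm{A})^{-}$, where the antitonicity of inversion is what turns the join over $i$ into the correct meet. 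In particular $\bm{A}\bm{x}^{*}\leq\bm{b}$ holds unconditionally, and every subsolution satisfies $\bm{x}\leq\bm{x}^{*}$.

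Next I would argue that $\bm{A}\bm{x}=\bm{b}$ is solvable if and only if $\bm{x}^{*}$ is itself a solution, i.e.\ $\bm{A}\bm{x}^{*}=\bm{b}$. One direction is immediate; for the other, if some $\bm{x}$ solves the equation then $\bm{x}\leq\bm{x}^{*}$, and isotonicity of multiplication gives $\bm{b}=\bm{A}\bm{x}\leq\bm{A}\bm{x}^{*}\leq\bm{b}$, forcing equality. It then remains to rephrase the vector equality $\bm{A}\bm{x}^{*}=\bm{b}$ as the scalar condition of the statement. Setting $\bm{c}=\bm{A}(\bm{b}^{-}\bm{A})^{-}$, we already know $\bm{c}\leq\bm{b}$, so each entry satisfies $c_{i}^{-1}b_{i}\geq\mathbb{1}$ and hence the scalar $\bm{c}^{-}\bm{b}=\bigoplus_{i}c_{i}^{-1}b_{i}\geq\mathbb{1}$; the identity $\bm{c}^{-}\bm{b}=\mathbb{1}$ therefore holds exactly when every $c_{i}^{-1}b_{i}\leq\mathbb{1}$, that is when $\bm{b}\leq\bm{c}$, which together with $\bm{c}\leq\bm{b}$ yields $\bm{c}=\bm{b}$. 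This is the desired equivalence.

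The entrywise manipulations and the use of antitone inversion are routine; the step that needs the most care is the residuation identity, where I would keep track of which entries $a_{ij}$ are nonzero so that the conjugate is applied correctly, and tacitly assume $\bm{b}$ regular so that $\bm{b}^{-}$ and the scalar $\bm{c}^{-}\bm{b}$ are well defined. Once the greatest-subsolution formula is in hand, the reduction to the scalar test $(\bm{A}(\bm{b}^{-}\bm{A})^{-})^{-}\bm{b}=\mathbb{1}$ is essentially bookkeeping.
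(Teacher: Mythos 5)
Your proposal is correct, and it is in fact the standard proof of this criterion: the paper itself does not prove Lemma~\ref{L-AbAb1} but imports it from \cite{Krivulin2006Solution}, where the argument is precisely your residuation scheme --- $(\bm{b}^{-}\bm{A})^{-}$ is the principal (greatest) solution of $\bm{A}\bm{x}\leq\bm{b}$, solvability of $\bm{A}\bm{x}=\bm{b}$ (which is the definition of linear dependence) reduces to the principal solution attaining equality, and the scalar identity $(\bm{A}(\bm{b}^{-}\bm{A})^{-})^{-}\bm{b}=\mathbb{1}$ encodes that equality. One point deserves to be made explicit rather than tacit: besides the regularity of $\bm{b}$ that you flag, your final step (inferring $\bm{b}\leq\bm{c}$ from $\bm{c}^{-}\bm{b}=\mathbb{1}$ with $\bm{c}=\bm{A}(\bm{b}^{-}\bm{A})^{-}$) needs every entry $c_{i}$ to be nonzero, which for regular $\bm{b}$ amounts to $\bm{A}$ being row-regular. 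If row $i$ of $\bm{A}$ is zero then $c_{i}=\mathbb{0}$, the term $c_{i}^{-1}b_{i}$ silently drops out of the supremum defining $\bm{c}^{-}\bm{b}$, and the test can return $\mathbb{1}$ for an unsolvable system (e.g., in $\mathbb{R}_{\max,+}$ take $\bm{A}=(0,\mathbb{0})^{T}$ and $\bm{b}=(0,0)^{T}$); similarly, the maximality of $\bm{x}^{*}$ fails on zero columns of $\bm{A}$, although there the defect is harmless since such columns contribute nothing to $\bm{A}\bm{x}$. These regularity caveats match the hypotheses under which the cited source establishes the criterion, so with them stated your proof is complete and coincides in substance with the original one.
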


A system of vectors $\bm{a}_{1},\ldots,\bm{a}_{n}$ is linearly dependent if at least one vector is linearly dependent on others. Two systems of vectors are equivalent if each vector of one system is linearly dependent on vectors of the other system.

Consider a system $\bm{a}_{1},\ldots,\bm{a}_{n}$ that may have linearly dependent vectors. To construct an equivalent linearly independent system, we can use a procedure that successively reduces the system until it becomes linearly independent (see, e.g., \cite{Cuninghamegreen2004Bases,Butkovic2010Maxlinear}). The procedure applies the criterion provided by Lemma~\ref{L-AbAb1} to examine the vectors one by one. It removes a vector if it is linearly dependent on others, or leaves the vector in the system otherwise. As one can see, the procedure yields a linearly independent system equivalent to the original one with no more than $O(mn^{2})$ operations.

\subsection{Solution of vector inequality}

We now present a complete solution to a two-sided vector inequality of a special form. Given a matrix $\bm{A}\in\mathbb{X}^{n\times n}$, consider the problem to find regular vectors $\bm{x}\in\mathbb{X}^{n}$ to satisfy the inequality
\begin{equation}
\bm{A}\bm{x}
\leq
\bm{x}.
\label{I-Axleqx}
\end{equation}

The problem is examined in different contexts under various assumptions in some publications, including \cite{Sergeev2009Multiorder,Elsner2010Maxalgebra,Sergeev2011Basic}, where similar solutions based on the concept of the Kleene star operator (the Kleene closure) are given.

To describe a solution in explicit form, we introduce a function that maps any square matrix $\bm{A}\in\mathbb{X}^{n\times n}$ onto the scalar $\mathop\mathrm{Tr}(\bm{A})=\mathop\mathrm{tr}\bm{A}\oplus\cdots\oplus\mathop\mathrm{tr}\bm{A}^{n}$.

Consider the operator (the Kleene star) which takes the matrix $\bm{A}$ to the series
\begin{equation*}
\bm{A}^{\ast}
=
\bm{I}
\oplus
\bm{A}
\oplus
\bm{A}^{2}
\oplus
\cdots
\end{equation*}

Suppose the condition $\mathop\mathrm{Tr}(\bm{A})\leq\mathbb{1}$ holds, which implies each cyclic product of entries in $\bm{A}$, including the diagonal entries, is less than or equal to $\mathbb{1}$. Then, the series converges (see, e.g., \cite{Carre1971Algebra,Cuninghamegreen1979Minimax} and also \cite{Yoeli1961Note}) so that the Kleene star becomes
\begin{equation*}
\bm{A}^{\ast}
=
\bm{I}\oplus\bm{A}
\oplus\cdots\oplus
\bm{A}^{n-1}.
\end{equation*}
    
As a consequence, the inequality $\bm{A}^{\ast}\geq\bm{A}^{k}$ is then valid for all integers $k\geq0$.

The next result offers a complete solution in a parametric form \cite{Krivulin2015Multidimensional,Krivulin2015Extremal}.
\begin{theorem}
\label{T-Axleqx}
For any matrix $\bm{A}$ the following statements hold.
\begin{enumerate}
\item
If $\mathop\mathrm{Tr}(\bm{A})\leq\mathbb{1}$, then all regular solutions to \eqref{I-Axleqx} are given in parametric form by $\bm{x}=\bm{A}^{\ast}\bm{u}$ where $\bm{u}$ is any regular vector.
\item
If $\mathop\mathrm{Tr}(\bm{A})>\mathbb{1}$, then there is only the trivial solution $\bm{x}=\bm{0}$.
\end{enumerate}
\end{theorem}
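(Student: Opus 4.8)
The plan is to prove each part by establishing the two inclusions between the claimed parametric family and the actual set of regular solutions, relying on the convergence of the Kleene star from the hypothesis on $\mathop\mathrm{Tr}(\bm{A})$ together with the isotonicity and extremal properties of the operations.

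For the first statement, assume $\mathop\mathrm{Tr}(\bm{A})\leq\mathbb{1}$. I would first verify \emph{sufficiency}, namely that every vector of the form $\bm{x}=\bm{A}^{\ast}\bm{u}$ with regular $\bm{u}$ satisfies \eqref{I-Axleqx}. Writing $\bm{A}\bm{A}^{\ast}=\bm{A}\oplus\bm{A}^{2}\oplus\cdots\oplus\bm{A}^{n}$ and using the established bound $\bm{A}^{\ast}\geq\bm{A}^{k}$ valid for all $k\geq0$, each term on the right is dominated by $\bm{A}^{\ast}$, so that $\bm{A}\bm{A}^{\ast}\leq\bm{A}^{\ast}$. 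Multiplying on the right by $\bm{u}$ and invoking isotonicity of multiplication gives $\bm{A}\bm{x}=\bm{A}\bm{A}^{\ast}\bm{u}\leq\bm{A}^{\ast}\bm{u}=\bm{x}$. Since $\bm{A}^{\ast}\geq\bm{I}$ and $\bm{u}$ is regular, the vector $\bm{x}=\bm{A}^{\ast}\bm{u}\geq\bm{u}$ is regular as well.

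Next I would establish \emph{completeness}, that every regular solution $\bm{x}$ arises this way. Starting from $\bm{A}\bm{x}\leq\bm{x}$ and applying $\bm{A}$ repeatedly with isotonicity yields $\bm{A}^{k}\bm{x}\leq\bm{A}^{k-1}\bm{x}\leq\cdots\leq\bm{x}$, hence $\bm{A}^{k}\bm{x}\leq\bm{x}$ for every $k\geq0$. Summing over $k=0,\ldots,n-1$ and recalling $\bm{A}^{\ast}=\bm{I}\oplus\bm{A}\oplus\cdots\oplus\bm{A}^{n-1}$, together with the majority law which makes the $k=0$ term $\bm{x}$ dominate, gives the exact identity $\bm{A}^{\ast}\bm{x}=\bm{x}$. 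It then suffices to take $\bm{u}=\bm{x}$, which is regular, to exhibit $\bm{x}=\bm{A}^{\ast}\bm{u}$ in the required form, completing the first statement. For the second statement I would argue by contraposition: if $\bm{x}$ is a regular solution then, as above, $\bm{A}^{k}\bm{x}\leq\bm{x}$ for all $k$, so comparing $i$-th components and isolating the diagonal contribution via the extremal property gives $(\bm{A}^{k})_{ii}x_{i}\leq(\bm{A}^{k}\bm{x})_{i}\leq x_{i}$; cancelling the invertible $x_{i}$ yields $(\bm{A}^{k})_{ii}\leq\mathbb{1}$ for every $i$ and $k$, whence $\mathop\mathrm{tr}\bm{A}^{k}\leq\mathbb{1}$ and $\mathop\mathrm{Tr}(\bm{A})\leq\mathbb{1}$. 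Thus $\mathop\mathrm{Tr}(\bm{A})>\mathbb{1}$ rules out any regular solution, leaving only $\bm{x}=\bm{0}$, which trivially satisfies the inequality.

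The algebraic manipulations with the Kleene star are routine; the step I expect to be most delicate is the completeness direction, where one must confirm that the chain of iterated inequalities collapses to the exact equality $\bm{A}^{\ast}\bm{x}=\bm{x}$ rather than a mere inequality, since it is precisely this equality that certifies the parametrization captures \emph{all} regular solutions and not just a subfamily.
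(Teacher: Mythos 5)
Your argument is correct, and there is nothing in the paper to compare it against directly: Theorem~\ref{T-Axleqx} is imported without proof from \cite{Krivulin2015Multidimensional,Krivulin2015Extremal}, so your write-up would serve as a self-contained substitute. Both directions of part~1 are sound. For sufficiency you lean on the bound $\bm{A}^{\ast}\geq\bm{A}^{k}$ for all $k\geq0$; the paper records this as a consequence of $\mathop\mathrm{Tr}(\bm{A})\leq\mathbb{1}$, but it is the one nontrivial external fact in your proof (it encodes the cycle-weight argument behind convergence of the Kleene star), so a fully self-contained version should either cite it or prove it. Your completeness step is exactly right, and you correctly identify the crux: the chain $\bm{A}^{k}\bm{x}\leq\bm{x}$ combined with the identity term $\bm{I}$ forces the exact fixed-point equality $\bm{A}^{\ast}\bm{x}=\bm{x}$, after which $\bm{u}=\bm{x}$ exhibits the parametrization. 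In part~2, cancelling the invertible $x_{i}$ from $(\bm{A}^{k})_{ii}x_{i}\leq x_{i}$ is legitimate in a semifield and yields $\mathop\mathrm{Tr}(\bm{A})\leq\mathbb{1}$, so the contrapositive correctly rules out \emph{regular} solutions. One caveat: your closing phrase ``leaving only $\bm{x}=\bm{0}$'' claims more than you proved, since nonzero non-regular solutions can survive $\mathop\mathrm{Tr}(\bm{A})>\mathbb{1}$ --- in $\mathbb{R}_{\max,+}$ take $\bm{A}=\left(\begin{smallmatrix}2&\mathbb{0}\\ \mathbb{0}&\mathbb{0}\end{smallmatrix}\right)$ and $\bm{x}=(\mathbb{0},0)^{T}$, which satisfies \eqref{I-Axleqx}. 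This looseness is inherited from the theorem's own phrasing (the problem is posed only for regular vectors), but your proof should say explicitly that what it establishes is the nonexistence of regular solutions, not that every solution is the zero vector.
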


We note that the calculation of both the function $\mathop\mathrm{Tr}(\bm{A})$ and the matrix $\bm{A}^{\ast}$ in the theorem requires at most $O(n^{4})$ operations if direct matrix computations are used, and $O(n^{3})$ operations with an application of the Floyd-Warshall algorithm.

\section{Two-sided inequality}
\label{S-TSI}

We are now in a position to formulate the problem of interest. Given matrices $\bm{A},\bm{B}\in\mathbb{X}^{m\times n}$, we need to find regular vectors $\bm{x}\in\mathbb{X}^{n}$ that satisfy the inequality
\begin{equation}
\bm{A}\bm{x}
\leq
\bm{B}\bm{x}.
\label{I-AxleqBx}
\end{equation}

It is not difficult to verify that the solution set of inequality \eqref{I-AxleqBx} is closed under vector addition and scalar multiplication. Indeed, let $\bm{x}$ and $\bm{y}$ be vectors such that the inequalities $\bm{A}\bm{x}\leq\bm{B}\bm{x}$ and $\bm{A}\bm{y}\leq\bm{B}\bm{y}$ hold, and consider any vector $\bm{z}=\alpha\bm{x}\oplus\beta\bm{y}$ where $\alpha$ and $\beta$ are scalars. Then, we have $\bm{A}\bm{z}=\alpha\bm{A}\bm{x}\oplus\beta\bm{A}\bm{y}\leq\alpha\bm{B}\bm{x}\oplus\beta\bm{B}\bm{y}=\bm{B}\bm{z}$, which means that the vector $\bm{z}$ is a solution of inequality \eqref{I-AxleqBx} as well.

Without loss of generality, we may assume that both matrices $\bm{A}$ and $\bm{B}$ are row-regular. Otherwise, if the matrix $\bm{A}$ has a zero row, say row $i$, then the corresponding scalar inequality $a_{i1}x_{1}\oplus\cdots\oplus a_{in}x_{n}\leq b_{i1}x_{1}\oplus\cdots\oplus b_{in}x_{n}$ trivially holds, and thus this inequality can be removed, whereas row $i$ eliminated from both matrices. Let the matrix $\bm{A}$ be row-regular and suppose the matrix $\bm{B}$ has a zero row $i$, which leads to the inequality $a_{i1}x_{1}\oplus\cdots\oplus a_{in}x_{n}\leq\mathbb{0}$. This inequality holds only if each unknown $x_{j}$ with $a_{ij}\ne\mathbb{0}$ is set to zero, which results in the non-regular solution of no interest.

Under additional assumptions some solutions of inequality \eqref{I-AxleqBx} can be directly obtained in explicit form. As an example, consider the next result.
\begin{lemma}
\label{L-AxleqBx}
Let $\bm{A}$ and $\bm{B}$ be row-regular matrices such that $\mathop\mathrm{Tr}(\bm{B}^{-}\bm{A})\leq\mathbb{1}$. Then, inequality \eqref{I-AxleqBx} has solutions given by
\begin{equation*}
\bm{x}
=
(\bm{B}^{-}\bm{A})^{\ast}\bm{u},
\qquad
\bm{u}>\bm{0}.
\end{equation*}
\end{lemma}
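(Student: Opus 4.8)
The goal is to show that any vector of the form $\bm{x}=(\bm{B}^{-}\bm{A})^{\ast}\bm{u}$ with $\bm{u}>\bm{0}$ satisfies $\bm{A}\bm{x}\leq\bm{B}\bm{x}$. My plan is to reduce the two-sided inequality \eqref{I-AxleqBx} to the special form \eqref{I-Axleqx} treated in Theorem~\ref{T-Axleqx}, by showing that every solution of $(\bm{B}^{-}\bm{A})\bm{x}\leq\bm{x}$ is also a solution of $\bm{A}\bm{x}\leq\bm{B}\bm{x}$. The link between the two inequalities is the conjugate matrix $\bm{B}^{-}$ together with the fact, recorded in the excerpt, that $\bm{B}\bm{B}^{-}\geq\bm{I}$ for any row-regular matrix $\bm{B}$.

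First I would observe that the hypothesis $\mathop\mathrm{Tr}(\bm{B}^{-}\bm{A})\leq\mathbb{1}$ is precisely the condition under which Theorem~\ref{T-Axleqx} applies to the matrix $\bm{A}^{\prime}=\bm{B}^{-}\bm{A}$. Hence, by the first statement of that theorem, every regular vector of the form $\bm{x}=(\bm{B}^{-}\bm{A})^{\ast}\bm{u}$ with $\bm{u}>\bm{0}$ is a solution of the inequality
\begin{equation*}
\bm{B}^{-}\bm{A}\bm{x}
\leq
\bm{x}.
\end{equation*}
The remaining task is to pass from this inequality back to \eqref{I-AxleqBx}. To do so, I would multiply the inequality $\bm{B}^{-}\bm{A}\bm{x}\leq\bm{x}$ on the left by the matrix $\bm{B}$. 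Since multiplication is isotone, this yields $\bm{B}\bm{B}^{-}\bm{A}\bm{x}\leq\bm{B}\bm{x}$. It then suffices to combine this with the inequality $\bm{A}\bm{x}\leq\bm{B}\bm{B}^{-}\bm{A}\bm{x}$, which follows from $\bm{I}\leq\bm{B}\bm{B}^{-}$ (valid because $\bm{B}$ is row-regular) by multiplying on the right by $\bm{A}\bm{x}$. Chaining the two inequalities gives $\bm{A}\bm{x}\leq\bm{B}\bm{B}^{-}\bm{A}\bm{x}\leq\bm{B}\bm{x}$, which is exactly the desired relation \eqref{I-AxleqBx}.

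The step I expect to require the most care is the passage $\bm{A}\bm{x}\leq\bm{B}\bm{B}^{-}\bm{A}\bm{x}$ via the inequality $\bm{I}\leq\bm{B}\bm{B}^{-}$. This relation is only guaranteed for row-regular $\bm{B}$, which is why that hypothesis is stated explicitly; the earlier discussion in the excerpt reduces the general case to this setting, so the assumption is harmless. Beyond this, the argument is a direct chain of isotone multiplications, and the only subtlety is to track which side each matrix is multiplied on so that the dimensions conform and the order relations are preserved. Note that the lemma claims only that these vectors \emph{are} solutions, not that they exhaust the solution set, so no converse inclusion needs to be established here.
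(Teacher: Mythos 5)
Your proposal is correct and follows essentially the same route as the paper's own proof: apply Theorem~\ref{T-Axleqx} to the matrix $\bm{B}^{-}\bm{A}$ to get regular solutions of $\bm{B}^{-}\bm{A}\bm{x}\leq\bm{x}$, then multiply on the left by $\bm{B}$ and use $\bm{I}\leq\bm{B}\bm{B}^{-}$ (valid since $\bm{B}$ is row-regular) to obtain $\bm{A}\bm{x}\leq\bm{B}\bm{B}^{-}\bm{A}\bm{x}\leq\bm{B}\bm{x}$. All steps, including the careful sides of multiplication, match the paper's argument.
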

\begin{proof}
By Theorem~\ref{T-Axleqx}, the condition $\mathop\mathrm{Tr}(\bm{B}^{-}\bm{A})\leq\mathbb{1}$ is equivalent to the existence of regular solutions $\bm{x}$ of the inequality $\bm{B}^{-}\bm{A}\bm{x}\leq\bm{x}$. Moreover, all regular solutions of this inequality are given by $\bm{x}=(\bm{B}^{-}\bm{A})^{\ast}\bm{u}$ where $\bm{u}$ is a regular vector of parameters.

Since the matrix $\bm{B}$ is row-regular, and thus $\bm{I}\leq\bm{B}\bm{B}^{-}$, the multiplication of the inequality $\bm{B}^{-}\bm{A}\bm{x}\leq\bm{x}$ by $\bm{B}$ on the left yields $\bm{A}\bm{x}\leq\bm{B}\bm{B}^{-}\bm{A}\bm{x}\leq\bm{B}\bm{x}$, which shows that all solutions of this inequality satisfy inequality \eqref{I-AxleqBx} as well.
\end{proof}

\section{Complete solution using sparse matrices}
\label{S-CSTSI}

We now derive a complete solution of inequality \eqref{I-AxleqBx} by applying a matrix sparsification technique to represent all solutions as a family of solution sets in parametric form. Each member of the family is described by a generating matrix calculated with a strictly row-monomial matrix obtained from the matrix $\bm{B}$ on the right-hand side of \eqref{I-AxleqBx}. Then, we combine all solutions by using a single generating matrix.

To handle inequality \eqref{I-AxleqBx}, in a similar way as in \cite{Butkovic1984Elimination,Wagneur2009Tropical} we first set to $\mathbb{0}$ each entry of the matrices $\bm{A}$ and $\bm{B}$ that do not affect the set of regular solutions. The next statement introduces the sparsified matrices obtained as a result.

\begin{lemma}
\label{L-aij-bij}
Let $\bm{A}=(a_{ij})$ and $\bm{B}=(b_{ij})$ be row-regular matrices. Define the sparsified matrices $\widehat{\bm{A}}=(\widehat{a}_{ij})$ and $\widehat{\bm{B}}=(\widehat{b}_{ij})$ with the entries
\begin{equation}
\widehat{a}_{ij}
=
\begin{cases}
a_{ij},
&
\text{\upshape if $a_{ij}>b_{ij}$};
\\
\mathbb{0},
&
\text{\upshape otherwise};
\end{cases}
\qquad
\widehat{b}_{ij}
=
\begin{cases}
b_{ij},
&
\text{\upshape if $b_{ij}\geq a_{ij}$};
\\
\mathbb{0},
&
\text{\upshape otherwise}.
\end{cases}
\label{E-aij-bij}
\end{equation}

Then, replacing the matrix $\bm{A}$ by $\widehat{\bm{A}}$ and $\bm{B}$ by $\widehat{\bm{B}}$ does not change the regular solutions of inequality \eqref{I-AxleqBx}.
\end{lemma}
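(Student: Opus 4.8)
The plan is to prove the stronger pointwise statement that
$\bm{A}\bm{x}\leq\bm{B}\bm{x}\iff\widehat{\bm{A}}\bm{x}\leq\widehat{\bm{B}}\bm{x}$
for every regular $\bm{x}$, which immediately gives equality of the two regular solution sets. Since both vector inequalities are componentwise, I would argue one row at a time. Fixing a row index $i$ and a regular vector $\bm{x}$, I split the column indices into $S_{A}=\{j:a_{ij}>b_{ij}\}$ and $S_{B}=\{j:b_{ij}\geq a_{ij}\}$; by the definition \eqref{E-aij-bij} we have $\widehat{a}_{ij}=a_{ij}$ exactly on $S_{A}$ and $\widehat{b}_{ij}=b_{ij}$ exactly on $S_{B}$, the complementary entries being $\mathbb{0}$. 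Writing $L=\bigoplus_{j}a_{ij}x_{j}$, $R=\bigoplus_{j}b_{ij}x_{j}$ for the $i$-th rows of $\bm{A}\bm{x}$ and $\bm{B}\bm{x}$, and $\widehat{L}=\bigoplus_{j\in S_{A}}a_{ij}x_{j}$, $\widehat{R}=\bigoplus_{j\in S_{B}}b_{ij}x_{j}$ for their sparsified counterparts, the goal reduces to $L\leq R\iff\widehat{L}\leq\widehat{R}$. The two structural facts I would lean on are that regularity of $\bm{x}$ makes multiplication by each $x_{j}$ strictly isotone (so $a_{ij}>b_{ij}$ yields $a_{ij}x_{j}>b_{ij}x_{j}$, using that the nonzero elements form an Abelian group), and that $\mathbb{X}$ is linearly ordered.

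For the implication $\widehat{L}\leq\widehat{R}\Rightarrow L\leq R$, which says every sparsified solution remains a solution of the original, I would note that on $S_{B}$ we have $a_{ij}x_{j}\leq b_{ij}x_{j}$, so $\bigoplus_{j\in S_{B}}a_{ij}x_{j}\leq\widehat{R}$. From the decomposition $L=\widehat{L}\oplus\bigoplus_{j\in S_{B}}a_{ij}x_{j}$ both summands are then bounded by $\widehat{R}\leq R$, and the majority law gives $L\leq R$ at once. This direction uses only isotonicity, not strictness.

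The reverse implication $L\leq R\Rightarrow\widehat{L}\leq\widehat{R}$, asserting that every original solution survives sparsification, is where the real work lies, and I would handle it by contradiction. Suppose $\widehat{L}>\widehat{R}$; linearity forces $S_{A}\neq\emptyset$, since otherwise $\widehat{L}=\mathbb{0}\leq\widehat{R}$. On $S_{A}$ strict isotonicity gives $b_{ij}x_{j}<a_{ij}x_{j}$, so the finite maximum $\bigoplus_{j\in S_{A}}b_{ij}x_{j}$ is attained at some index and is strictly below $\widehat{L}$. Combining this with the assumption $\widehat{R}<\widehat{L}$ and the decomposition $R=\widehat{R}\oplus\bigoplus_{j\in S_{A}}b_{ij}x_{j}$, the linear order lets me merge two strict inequalities into $R<\widehat{L}\leq L$, contradicting $L\leq R$. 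This step is exactly the main obstacle: it is the only place where both the strictness supplied by regularity of $\bm{x}$ and the totality of the order are indispensable, since dropping either would allow an entry of $\bm{A}$ dominated by $\bm{B}$ to resurface on the right-hand side and break the equivalence.

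Finally I would dispose of the degenerate rows to confirm nothing is lost. Row-regularity of $\bm{A}$ excludes $L=\mathbb{0}$ for regular $\bm{x}$; and if $S_{B}=\emptyset$ in some row, then $\widehat{R}=\mathbb{0}$ while $\widehat{L}>\mathbb{0}$, so the sparsified $i$-th inequality fails, as does the original one by the strict-comparison argument above, leaving the solution sets in agreement. Assembling the per-row equivalences over all $i$ yields $\bm{A}\bm{x}\leq\bm{B}\bm{x}\iff\widehat{\bm{A}}\bm{x}\leq\widehat{\bm{B}}\bm{x}$ for every regular $\bm{x}$, which is the assertion.
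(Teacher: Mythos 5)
Your proof is correct and takes essentially the same route as the paper's: a row-by-row comparison resting on the two facts that $a_{ij}>b_{ij}$ forces $a_{ij}x_{j}>b_{ij}x_{j}$ for regular $\bm{x}$ (strict isotonicity via the group structure), while $a_{ij}\leq b_{ij}$ gives $a_{ij}x_{j}\leq b_{ij}x_{j}$, so dominated terms can neither attain the maximum on the right-hand side nor cause a violation on the left. Your $S_{A}/S_{B}$ partition and the contradiction argument in the harder direction simply formalize the paper's entry-by-entry elimination, making explicit (through totality of the order and finiteness of the maximum) the step the paper states informally as ``the term $b_{ij}x_{j}$ does not contribute to the right-hand side.''
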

\begin{proof}
For each $i=1,\ldots,m$, consider all nonzero solutions $x_{1},\ldots,x_{n}\in\mathbb{X}$ of the inequality which corresponds to row $i$ in the matrices $\bm{A}$ and $\bm{B}$, and takes the form
\begin{equation} 
a_{i1}x_{1}\oplus\cdots\oplus
a_{in}x_{n}\leq
b_{i1}x_{1}\oplus\cdots\oplus
b_{in}x_{n}.
\label{I-ai1xiainxnleqbi1x1binxn}
\end{equation} 

Suppose that the condition $a_{ij}\leq b_{ij}$ holds for some $j=1,\ldots,n$. Then, the inequality $a_{ij}x_{j}\leq b_{ij}x_{j}\leq b_{i1}x_{1}\oplus\cdots\oplus b_{in}x_{n}$ is valid for all $x_{j}\in\mathbb{X}$, which shows that the term $a_{ij}x_{j}$ cannot be greater than the right-hand side of inequality \eqref{I-ai1xiainxnleqbi1x1binxn}. Observing that this term cannot violate \eqref{I-ai1xiainxnleqbi1x1binxn}, it can be eliminated by setting $a_{ij}=\mathbb{0}$.  

If the condition $a_{ij}>b_{ij}$ is satisfied, then the inequality $a_{ij}x_{j}>b_{ij}x_{j}$ is valid for all $x_{j}\ne\mathbb{0}$. Since the term $b_{ij}x_{j}$ does not contribute to the right-hand side of \eqref{I-ai1xiainxnleqbi1x1binxn}, we can set $b_{ij}=\mathbb{0}$ without affecting all regular solutions. 
\end{proof}

As it is easy to see, the above replacement procedure requires $O(mn)$ operations.

In what follows the matrices $\widehat{\bm{A}}$ and $\widehat{\bm{B}}$ obtained from $\bm{A}$ and $\bm{B}$ according to \eqref{E-aij-bij} are referred to as \textit{refined matrices} of the inequality or simply as refined matrices.

\subsection{Solution using matrix sparsification}

To derive a family of subsets that describe all solutions of inequality \eqref{I-AxleqBx}, we state and prove the next theorem.  
\begin{theorem}
\label{T-AxleqBx}
Let $\bm{A}$ and $\bm{B}$ be refined row-regular matrices, and $\bm{G}$ be a strictly row-monomial matrix obtained from $\bm{B}$ by fixing one nonzero entry in each row while setting the others to $\mathbb{0}$. Denote by $\mathcal{G}$ the set of the matrices $\bm{G}$ which satisfy the condition $\mathop\mathrm{tr}\bm{H}^{n}\leq\mathbb{1}$ where $\bm{H}=\bm{G}^{-}(\bm{A}\oplus\bm{B})$.

Then, all regular solutions of inequality \eqref{I-AxleqBx} are given by the conditions
\begin{equation}
\bm{x}
=
(\bm{I}\oplus\bm{H}^{n-1})\bm{u},
\qquad
\bm{H}=\bm{G}^{-}(\bm{A}\oplus\bm{B}),
\qquad
\bm{G}
\in
\mathcal{G},
\qquad
\bm{u}
>
\bm{0}.
\label{E-xeqIHn1u-HeqGAB-GinG-uge0}
\end{equation}
\end{theorem}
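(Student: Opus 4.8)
The plan is to fix a strictly row-monomial $\bm{G}$ obtained from $\bm{B}$, reduce the two-sided inequality restricted to this selection to a one-sided inequality solvable by Theorem~\ref{T-Axleqx}, and then prove the theorem in the two directions of soundness and completeness. The central reduction rests on splitting $\bm{H}=\bm{G}^{-}(\bm{A}\oplus\bm{B})=\bm{G}^{-}\bm{A}\oplus\bm{G}^{-}\bm{B}$, so that $\bm{H}\bm{x}\leq\bm{x}$ is equivalent to the pair $\bm{G}^{-}\bm{A}\bm{x}\leq\bm{x}$ and $\bm{G}^{-}\bm{B}\bm{x}\leq\bm{x}$. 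Since $\bm{G}$ is strictly row-monomial, the equivalence $\bm{A}\bm{x}\geq\bm{y}\Leftrightarrow\bm{x}\geq\bm{A}^{-}\bm{y}$ recorded above turns the first inequality into $\bm{A}\bm{x}\leq\bm{G}\bm{x}$. Because $\bm{G}$ arises from $\bm{B}$ by zeroing entries, we have $\bm{G}\leq\bm{B}$ and hence $\bm{G}\bm{x}\leq\bm{B}\bm{x}$, so together with $\bm{G}^{-}\bm{G}\leq\bm{I}$ the second inequality amounts to $\bm{G}\bm{x}=\bm{B}\bm{x}$. Thus $\bm{H}\bm{x}\leq\bm{x}$ describes exactly those $\bm{x}$ for which $\bm{A}\bm{x}\leq\bm{G}\bm{x}=\bm{B}\bm{x}$, i.e.\ the selected entries of $\bm{G}$ realize the row maxima of $\bm{B}\bm{x}$ while dominating $\bm{A}\bm{x}$.

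The main obstacle, and the step that makes the clean form \eqref{E-xeqIHn1u-HeqGAB-GinG-uge0} possible, is the structural analysis of $\bm{H}$, where I will crucially use that $\bm{A}$ and $\bm{B}$ are refined. For a row $i$ with selected column $k_i$ one has $b_{i,k_i}\neq\mathbb{0}$, which by \eqref{E-aij-bij} forces $a_{i,k_i}=\mathbb{0}$; hence the diagonal entry of $\bm{H}$ at any selected column equals $\mathbb{1}$, while every row of $\bm{H}$ whose index is not a selected column is entirely zero. Consequently, any nonzero cyclic product in $\bm{H}$ runs only through selected indices, each carrying a loop of weight $\mathbb{1}$. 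This lets me pad any cyclic product of length $\ell\leq n$ with $n-\ell$ unit loops into a closed walk of length $n$ of the same weight, so $\mathop\mathrm{tr}\bm{H}^{n}\leq\mathbb{1}$ forces every cyclic product to be at most $\mathbb{1}$; the easy converse being trivial, the condition $\mathop\mathrm{tr}\bm{H}^{n}\leq\mathbb{1}$ defining $\mathcal{G}$ is therefore equivalent to $\mathop\mathrm{Tr}(\bm{H})\leq\mathbb{1}$, which is exactly the hypothesis under which Theorem~\ref{T-Axleqx} applies. The same unit loops yield $(\bm{H}^{k+1})_{jl}\geq\bm{H}_{jj}(\bm{H}^{k})_{jl}=(\bm{H}^{k})_{jl}$ whenever $(\bm{H}^{k})_{jl}\neq\mathbb{0}$ (as then $j$ is selected), giving the chain $\bm{H}\leq\bm{H}^{2}\leq\cdots\leq\bm{H}^{n-1}$ and hence the identity $\bm{H}^{\ast}=\bm{I}\oplus\bm{H}\oplus\cdots\oplus\bm{H}^{n-1}=\bm{I}\oplus\bm{H}^{n-1}$.

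It remains to assemble the two directions. For soundness, let $\bm{G}\in\mathcal{G}$ and $\bm{u}>\bm{0}$; by the equivalence of trace conditions above, $\mathop\mathrm{Tr}(\bm{H})\leq\mathbb{1}$, so by Theorem~\ref{T-Axleqx} the vector $\bm{x}=\bm{H}^{\ast}\bm{u}=(\bm{I}\oplus\bm{H}^{n-1})\bm{u}$ is a regular solution of $\bm{H}\bm{x}\leq\bm{x}$, and multiplying $\bm{G}^{-}\bm{A}\bm{x}\leq\bm{x}$ by $\bm{G}$ on the left together with $\bm{I}\leq\bm{G}\bm{G}^{-}$ and $\bm{G}\leq\bm{B}$ gives $\bm{A}\bm{x}\leq\bm{G}\bm{x}\leq\bm{B}\bm{x}$, so $\bm{x}$ solves \eqref{I-AxleqBx}. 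For completeness, given any regular solution $\bm{x}$ of \eqref{I-AxleqBx}, I select in each row $i$ a column $k_i$ attaining the maximum $b_{i1}x_{1}\oplus\cdots\oplus b_{in}x_{n}$; regularity of $\bm{x}$ and row-regularity of $\bm{B}$ make this entry nonzero, so the resulting $\bm{G}$ is strictly row-monomial with $\bm{G}\bm{x}=\bm{B}\bm{x}$, whence $\bm{A}\bm{x}\leq\bm{B}\bm{x}=\bm{G}\bm{x}$ and $\bm{H}\bm{x}\leq\bm{x}$. The existence of this regular solution forces $\mathop\mathrm{Tr}(\bm{H})\leq\mathbb{1}$ by Theorem~\ref{T-Axleqx}, so $\bm{G}\in\mathcal{G}$, and the same theorem represents $\bm{x}=\bm{H}^{\ast}\bm{u}=(\bm{I}\oplus\bm{H}^{n-1})\bm{u}$ for a regular $\bm{u}$ (for instance $\bm{u}=\bm{x}$). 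Combining the two directions with the identity $\bm{H}^{\ast}=\bm{I}\oplus\bm{H}^{n-1}$ yields precisely \eqref{E-xeqIHn1u-HeqGAB-GinG-uge0}.
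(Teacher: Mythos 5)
Your proof is correct and follows the paper's overall architecture: in both arguments a regular solution selects the maximal term $b_{i,k_i}x_{k_i}$ in each row (using the assumed total order), which yields a strictly row-monomial $\bm{G}$ with $\bm{G}\bm{x}\geq(\bm{A}\oplus\bm{B})\bm{x}$, equivalently $\bm{H}\bm{x}\leq\bm{x}$, and Theorem~\ref{T-Axleqx} then supplies both the membership condition $\mathop\mathrm{Tr}(\bm{H})\leq\mathbb{1}$ and the parametrization $\bm{x}=\bm{H}^{\ast}\bm{u}$. You diverge in two local arguments, both legitimately. First, for soundness you split $\bm{H}\bm{x}\leq\bm{x}$ into the equivalent pair $\bm{A}\bm{x}\leq\bm{G}\bm{x}$ and $\bm{G}\bm{x}=\bm{B}\bm{x}$ and read off $\bm{A}\bm{x}\leq\bm{G}\bm{x}\leq\bm{B}\bm{x}$ pointwise, whereas the paper works at the matrix level through the chain $\bm{B}\bm{H}^{\ast}\geq\bm{B}\bm{H}\bm{H}^{\ast}\geq\bm{B}\bm{G}^{-}\bm{A}\bm{H}^{\ast}\geq\bm{A}\bm{H}^{\ast}$; the ingredients ($\bm{G}\bm{G}^{-}\geq\bm{I}$ and $\bm{G}\leq\bm{B}$) are the same, but your packaging makes the meaning of $\bm{H}\bm{x}\leq\bm{x}$ more transparent. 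Second, for the reductions $\bm{H}^{\ast}=\bm{I}\oplus\bm{H}^{n-1}$ and $\mathop\mathrm{Tr}(\bm{H})=\mathop\mathrm{tr}\bm{H}^{n}$ the paper argues purely algebraically, using $\bm{H}\geq\bm{G}^{-}\bm{B}$ and $\bm{B}\bm{G}^{-}\geq\bm{G}\bm{G}^{-}\geq\bm{I}$ to get $\bm{H}^{2}\geq\bm{G}^{-}\bm{B}\bm{G}^{-}(\bm{A}\oplus\bm{B})\geq\bm{H}$, without ever invoking the refinement; you instead argue combinatorially via unit loops $h_{jj}=\mathbb{1}$ at selected indices (zero rows elsewhere) and padding of cycles, leaning on the disjoint supports of the refined matrices to pin those loops at exactly $\mathbb{1}$. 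Two small observations on that second step: the refinement is not actually needed, since even without it $h_{jj}\geq b_{ij}^{-1}b_{ij}=\mathbb{1}$ for every selected column $j$, which already suffices for both the padding and the monotonicity $(\bm{H}^{k+1})_{jl}\geq h_{jj}(\bm{H}^{k})_{jl}$; and once you have the chain $\bm{H}\leq\bm{H}^{2}\leq\cdots$ (your argument gives it for all $k\geq1$, not just up to $n-1$), the trace identity $\mathop\mathrm{Tr}(\bm{H})=\mathop\mathrm{tr}\bm{H}^{n}$ follows at once from $\mathop\mathrm{tr}\bm{H}^{k}\leq\mathop\mathrm{tr}\bm{H}^{n}$, so the cycle-padding argument, while correct, is redundant. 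These are stylistic points; the proof is complete in both directions.
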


\begin{proof}
Let us show that any regular solution of inequality \eqref{I-AxleqBx} is given by the conditions at \eqref{E-xeqIHn1u-HeqGAB-GinG-uge0} and vice versa. We take a regular solution $\bm{x}=(x_{j})$ of \eqref{I-AxleqBx} with matrices $\bm{A}=(a_{ij})$ and $\bm{B}=(b_{ij})$, and examine, for certain $p$, the scalar inequality 
\begin{equation}
a_{p1}x_{1}\oplus\cdots\oplus a_{pn}x_{n}
\leq
b_{p1}x_{1}\oplus\cdots\oplus b_{pn}x_{n}.
\label{I-ap1x1apnxnleqbp1x1bpnxn}
\end{equation}

Our purpose is to reduce, under appropriate conditions, this inequality to a simpler inequality with one term on the right, which presents a key component of the solution approach. Suppose that inequality \eqref{I-ap1x1apnxnleqbp1x1bpnxn} holds for some $x_{1},\ldots,x_{n}$ and consider the sum on the right-hand side. Since the order defined by the relation $\leq$ is linear, we can pick out a term, say $b_{pq}x_{q}$, that is maximal among all terms, and thereby produces the value of the sum. Then, we can replace \eqref{I-ap1x1apnxnleqbp1x1bpnxn} by two inequalities $b_{pq}x_{q}\geq b_{p1}x_{1}\oplus\cdots\oplus b_{pn}x_{n}$ and $b_{pq}x_{q}\geq a_{p1}x_{1}\oplus\cdots\oplus a_{pn}x_{n}$ where $b_{pq}>\mathbb{0}$. Finally, we combine these inequalities into an equivalent inequality that is given by 
\begin{equation}
b_{pq}x_{q}
\geq
(a_{p1}\oplus b_{p1})x_{1}\oplus\cdots\oplus(a_{pn}\oplus b_{pn})x_{n}.
\label{I-bpqxqgeqap1bp1x1apnbpnxn}
\end{equation}

Further assume that we select maximum terms in all scalar inequalities in \eqref{I-AxleqBx} and then substitute an inequality in the form of \eqref{I-bpqxqgeqap1bp1x1apnbpnxn} for each scalar inequality. Let $\bm{G}$ be a strictly row-monomial matrix that is formed from $\bm{B}$ by fixing the entry which corresponds to the maximum term in each row, while setting the other entries to $\mathbb{0}$. By using matrix $\bm{G}$, the inequalities obtained for each row are combined into the vector inequality $\bm{G}\bm{x}\geq(\bm{A}\oplus\bm{B})\bm{x}$. Since the matrix $\bm{G}$ is strictly row-monomial,  this vector inequality is equivalent to the inequality $\bm{x}\geq\bm{G}^{-}(\bm{A}\oplus\bm{B})\bm{x}$, which takes the form $\bm{x}\geq\bm{H}\bm{x}$ with the notation $\bm{H}=\bm{G}^{-}(\bm{A}\oplus\bm{B})$.

By assumption the inequality $\bm{x}\geq\bm{H}\bm{x}$ has a regular solution $\bm{x}$. Then, it follows from Theorem~\ref{T-Axleqx} that the condition $\mathop\mathrm{Tr}(\bm{H})\leq\mathbb{1}$ holds, whereas all regular solutions of the inequality are given by $\bm{x}=\bm{H}^{\ast}\bm{u}$ with a regular vector $\bm{u}$.

Consider now a vector $\bm{x}=\bm{H}^{\ast}\bm{u}$ where $\bm{u}$ is a regular vector, and $\bm{H}=\bm{G}^{-}(\bm{A}\oplus\bm{B})$ with a strictly row-monomial matrix $\bm{G}$ such that $\mathop\mathrm{Tr}(\bm{H})\leq\mathbb{1}$. To verify that $\bm{x}$ satisfies inequality \eqref{I-AxleqBx}, we note that $\bm{H}^{\ast}\geq\bm{H}^{k}$ for all $k\geq0$ since $\mathop\mathrm{Tr}(\bm{H})\leq\mathbb{1}$, and hence $\bm{H}^{\ast}\geq\bm{H}\oplus\cdots\oplus\bm{H}^{n}=\bm{H}\bm{H}^{\ast}$. Moreover, we see that $\bm{H}=\bm{G}^{-}(\bm{A}\oplus\bm{B})\geq\bm{G}^{-}\bm{A}$ and $\bm{B}\bm{G}^{-}\geq\bm{G}\bm{G}^{-}\geq\bm{I}$ as $\bm{B}\geq\bm{G}$. Using these inequalities yields
\begin{equation*}
\bm{B}\bm{H}^{\ast}
\geq
\bm{B}\bm{H}\bm{H}^{\ast}
\geq
\bm{B}\bm{G}^{-}\bm{A}\bm{H}^{\ast}
\geq
\bm{A}\bm{H}^{\ast}. 
\end{equation*}

Therefore, we see that $\bm{B}\bm{x}=\bm{B}\bm{H}^{\ast}\bm{u}\geq\bm{A}\bm{H}^{\ast}\bm{u}=\bm{A}\bm{x}$, and thus $\bm{x}$ satisfies \eqref{I-AxleqBx}.

It remains to verify that $\bm{H}^{\ast}=\bm{I}\oplus \bm{H}^{n-1}$ and $\mathop\mathrm{Tr}(\bm{H})=\mathop\mathrm{tr}\bm{H}^{n}$ so as to represent the solution as in the statement of the theorem. Since $\bm{H}=\bm{G}^{-}(\bm{A}\oplus\bm{B})\geq\bm{G}^{-}\bm{B}$, we have $\bm{H}^{2}=(\bm{G}^{-}\bm{A}\oplus\bm{G}^{-}\bm{B})^{2}\geq\bm{G}^{-}\bm{B}(\bm{G}^{-}\bm{A}\oplus\bm{G}^{-}\bm{B})\geq\bm{G}^{-}\bm{B}\bm{G}^{-}(\bm{A}\oplus\bm{B})$. By observing that $\bm{B}\bm{G}^{-}\geq\bm{I}$, we further obtain
\begin{equation*}
\bm{H}^{2}
\geq
\bm{G}^{-}\bm{B}\bm{G}^{-}
(\bm{A}\oplus\bm{B})
\geq
\bm{G}^{-}(\bm{A}\oplus\bm{B})
=
\bm{H},
\end{equation*}
and then conclude that $\bm{H}^{k+1}\geq\bm{H}^{k}$ for all $k\geq1$. As a result, the equalities $\bm{H}^{\ast}=\bm{I}\oplus\bm{H}\oplus\cdots\oplus\bm{H}^{n-1}=\bm{I}\oplus\bm{H}^{n-1}$ and $\mathop\mathrm{Tr}(\bm{H})=\mathop\mathrm{tr}\bm{H}\oplus\cdots\oplus\mathop\mathrm{tr}\bm{H}^{n}=\mathop\mathrm{tr}\bm{H}^{n}$ are valid, which completes the proof.
\end{proof}

Let us note that to find one solution subset according to Theorem~\ref{T-AxleqBx}, we need to solve inequality \eqref{I-Axleqx} with the matrix $\bm{H}=\bm{G}^{-}(\bm{A}\oplus\bm{B})$. Since $\bm{H}$ is a square matrix of order $n$, one can obtain the solution in $O(n^{4})$ operations by direct matrix multiplications or in $O(n^{3})$ with the Floyd-Warshall algorithm. By observing that $\bm{H}^{k}=\bm{G}^{-}\bm{F}^{k-1}(\bm{A}\oplus\bm{B})$ holds for all $k\geq1$, where $\bm{F}=(\bm{A}\oplus\bm{B})\bm{G}^{-}$ is a matrix of order $m$, we refine these estimates of complexity by replacing $n$ by $\min(m,n)$.

In the simplest case when the refined matrix $\bm{B}$ has no more than one nonzero entry in each column, which yields only one strictly row-monomial matrix $\bm{G}$, the above estimates determine the overall complexity of complete solution. As the number of nonzero entries in the matrix $\bm{B}$ rises, the number of matrices $\bm{G}$ that can be obtained from $\bm{B}$ increases very rapidly, and becomes exponentially large in the worst case.

As a worst case for the brute-force generation of the row-monomial matrices, one can consider the case of a matrix $\bm{B}$ without zero entries, which formally yields $n^{m}$ matrices $\bm{G}$. To overcome the problem of increasing complexity, we propose below a backtracking procedure intended to reduce the number of matrices $\bm{G}$ to examine.

\subsection{Closed-form representation of solution}

The next result shows how to represent all solutions in a compact parametric form using a single generating matrix.

\begin{corollary}
Under the conditions and notations of Theorem~\ref{T-AxleqBx}, denote by $\bm{S}$ the matrix whose columns form the maximal independent system of columns in the matrices $\bm{I}\oplus\bm{H}^{n-1}=\bm{I}\oplus(\bm{G}^{-}(\bm{A}\oplus\bm{B}))^{n-1}$ for all $\bm{G}\in\mathcal{G}$. 

Then, all regular solutions of inequality \eqref{I-AxleqBx} are given in parametric form by
\begin{equation*}
\bm{x}
=
\bm{S}\bm{v},
\qquad
\bm{v}
>
\bm{0}.
\end{equation*}
\end{corollary}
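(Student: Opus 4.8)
The plan is to show that the parametric family $\bm{x} = \bm{S}\bm{v}$ with $\bm{v} > \bm{0}$ coincides exactly with the solution set already characterized by Theorem~\ref{T-AxleqBx}. Theorem~\ref{T-AxleqBx} tells us that every regular solution is of the form $\bm{x} = (\bm{I}\oplus\bm{H}^{n-1})\bm{u}$ for some $\bm{G}\in\mathcal{G}$ and some regular $\bm{u}$, and conversely every such $\bm{x}$ is a solution. So the task reduces to proving that the set of vectors spanned (tropically, with regular coefficients) by the columns of $\bm{S}$ equals the union, over all $\bm{G}\in\mathcal{G}$, of the sets spanned by the columns of $\bm{I}\oplus\bm{H}^{n-1}$.

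First I would recall that the solution set is a tropical cone: by the argument in Section~\ref{S-TSI} it is closed under tropical vector addition $\oplus$ and scalar multiplication. Hence taking the union of the generating columns of all the matrices $\bm{I}\oplus\bm{H}^{n-1}$, $\bm{G}\in\mathcal{G}$, and forming all $\oplus$-combinations with regular coefficients produces exactly the same set as each individual parametric family contributes, together with their combinations — and this combined set is still contained in the solution set. The key point is that $\bm{S}$ is defined to collect a maximal linearly independent subsystem of all these columns, so by the notion of equivalence of systems from the linear-dependence subsection, the columns of $\bm{S}$ are equivalent to the full collection of columns: every discarded column is linearly dependent on those retained in $\bm{S}$. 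Consequently any vector expressible as a regular combination of the full collection is also a regular combination of the columns of $\bm{S}$, and vice versa.

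The two inclusions then follow. For $\bm{S}\bm{v} \subseteq \text{solutions}$: each column of $\bm{S}$ is a column of some $\bm{I}\oplus\bm{H}^{n-1}$ and is therefore itself a solution (set $\bm{u}$ to the appropriate unit-type regular vector, or note the column is $(\bm{I}\oplus\bm{H}^{n-1})\bm{e}$ for a suitable regular $\bm{e}$); since the solution set is a tropical cone, $\bm{S}\bm{v}$ is a solution for every $\bm{v}>\bm{0}$. For the reverse inclusion: any regular solution $\bm{x} = (\bm{I}\oplus\bm{H}^{n-1})\bm{u}$ is a regular combination of columns of $\bm{I}\oplus\bm{H}^{n-1}$, hence of the full collection, hence — by the equivalence established via $\bm{S}$ being a maximal independent subsystem — a regular combination of the columns of $\bm{S}$, i.e.\ $\bm{x} = \bm{S}\bm{v}$ for some $\bm{v}$. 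The regularity of $\bm{v}$ needs a small check, but follows because $\bm{x}$ is regular and the relevant generators are regular.

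The main obstacle I anticipate is the careful handling of regularity of the parameter vector $\bm{v}$ under the passage from one generating system to an equivalent one. The equivalence of systems only guarantees expressibility with \emph{some} scalar coefficients; ensuring those coefficients can be taken strictly positive (regular) when reconstructing a regular $\bm{x}$ requires that no genuinely contributing column is assigned a zero coefficient. I would address this by arguing that each column of $\bm{S}$ is regular (it has $\mathbb{1}$ on a diagonal entry coming from the $\bm{I}$ summand, so it is nonzero), and by invoking the extremal/majority property of tropical addition to show a regular target forces at least one regular coefficient and permits all coefficients to be taken regular without leaving the cone. This is the one place where the otherwise routine reduction to the already-proved Theorem~\ref{T-AxleqBx} genuinely needs tropical-specific care rather than a transcription of classical linear-algebra reasoning.
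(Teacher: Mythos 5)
Your proposal is correct and follows essentially the same route as the paper's proof: invoke Theorem~\ref{T-AxleqBx}, use closure of the solution set under $\oplus$ and scalar multiplication to identify the union of the parametric families with the tropical linear span of all the columns, and pass to the maximal independent subsystem $\bm{S}$ without changing that span. Your added care over regularity of $\bm{v}$ (a detail the paper leaves implicit) is sound in substance, but note that the columns of $\bm{S}$ are in general only nonzero rather than regular --- they typically contain $\mathbb{0}$ entries off the diagonal position inherited from the $\bm{I}$ summand --- so the coefficient-lifting step should be justified by the nonzeroness of each column together with the regularity of the target $\bm{x}$, which lets every zero coefficient be raised to a sufficiently small nonzero scalar without altering $\bm{S}\bm{v}$.
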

\begin{proof}
By Theorem~\ref{T-AxleqBx}, the solution set is the union of sets, each generated by the columns of the matrix $\bm{I}\oplus\bm{H}^{n-1}=\bm{I}\oplus(\bm{G}^{-}(\bm{A}\oplus\bm{B}))^{n-1}$ for all $\bm{G}\in\mathcal{G}$. Observing that the solution set is closed under vector addition and scalar multiplication, we conclude that this union is the linear span of all columns in the generating matrices.

Furthermore, we reduce the set of columns by eliminating those which are linearly dependent on others, and thus can be deleted without affecting the entire linear span. With the matrix $\bm{S}$ formed from the reduced set of columns, all solutions are given in parametric form by $\bm{x}=\bm{S}\bm{v}$ where $\bm{v}$ is any regular vector of appropriate size. 
\end{proof}

If only one matrix $\bm{G}$ is available, the derivation of the generating matrix $\bm{S}$ reduces to checking the linear dependence of columns of one matrix $\bm{I}\oplus\bm{H}^{n-1}$, which requires $O(n^{3})$ operations. It is not difficult to see that in the worst case, the computational complexity increases as square of the number of matrices $\bm{G}$ obtained.

\section{Computational implementation of solution}
\label{S-CIS}

To derive a complete solution of two-sided inequality \eqref{I-AxleqBx}, we offer a solution procedure that involves: (i) preliminary refinement of the matrices, (ii) generation of the solution sets, and (iii) derivation of the matrix which generates all solutions.

\subsection{Refinement of matrices}

We begin the procedure with the refinement of the matrices according to Lemma~\ref{L-aij-bij}. Suppose the inequality, after refinement, gets zero rows in the matrix $\bm{A}$ or $\bm{B}$. Then, as it is shown before, one can reduce the inequality by deleting some rows in $\bm{A}$ and $\bm{B}$ or conclude that there is no regular solution.

Provided that both matrices $\bm{A}$ and $\bm{B}$ upon refinement are row-regular, the procedure passes to the next step of generating the family of solution sets.

\subsection{Generation of solution sets}

Consider the solution offered to inequality \eqref{I-AxleqBx} by Theorem~\ref{I-AxleqBx} in the form of a family of solution sets and note that each member of the family involves a strictly row-monomial matrix $\bm{G}$ to calculate the corresponding generating matrix $\bm{I}\oplus\bm{H}^{n-1}$ from the matrix $\bm{H}=\bm{G}^{-}(\bm{A}\oplus\bm{B})$. The matrices $\bm{G}$ are successively obtained from the matrix $\bm{B}$ by setting to $\mathbb{0}$ all but one of the entries in each row of $\bm{B}$. Since the number of the strictly row-monomial matrices may be excessively large, we propose a backtracking procedure that aims at rejecting in advance those matrices which cannot serve to provide a solution.  

The procedure consecutively checks rows $i=1,\ldots,n$ of the matrix $\bm{B}$ to find and fix, over all $j=1,\ldots,n$, a nonzero entry $b_{ij}$, while setting the other entries to $\mathbb{0}$. The selection of a nonzero entry $b_{pq}$ in row $p$ implies that the term $b_{pq}x_{q}$ is taken maximal over all $q$, which establishes relations between $x_{q}$ and $x_{j}$ with $j\ne q$. We exploit these relations to modify entries in the remaining rows by setting them to $\mathbb{0}$ provided that these entries cannot affect the corresponding scalar inequalities in \eqref{I-AxleqBx}. One step of the procedure is completed when a nonzero entry is fixed in the last row, which makes a new strictly row-monomial matrix $\bm{G}$ fully defined.

A new step of the procedure is to take the next nonzero entry in the current row if such an entry exists. Otherwise, the procedure has to go back to the previous row to cancel the last selection of nonzero entry and roll back all modifications made to the matrix in accordance with this selection. Next, the procedure fixes a new nonzero entry in this row, if it exists, or continues back to the previous rows until an unexplored nonzero entry is found. On selection of a new entry, the procedure continues forward to modify and fix nonzero entries in the next rows.

The procedure is repeated until no more nonzero entries can be selected in the first row. A description of the procedure in recursive form is given in Algorithm~\ref{A-GenerateSparseMatrices}.

\begin{table}[h]
\renewcommand{\COMMENT}[1]{\mbox{\bfseries comment:}\ \mbox{#1}}
\begin{pseudocode}{GenerateSparseMatrices}{\bm{B},\mathcal{G}}\label{A-GenerateSparseMatrices}
\PROCEDURE{Backtrack}{\bm{B},p,q}
\COMMENT{Sparsify rows $i\geq p$ in the matrix $\bm{G}=(b_{ij})$}
\\
\IF p\leq m \THEN
\BEGIN
	\COMMENT{Verify whether $b_{pq}$ can be fixed in row $p$}
	\\
	\IF b_{pq}\ne\mathbb{0} \THEN
	\BEGIN
		\COMMENT{Copy $\bm{B}$ into the matrix $\bm{B}^{\prime}=(b_{ij}^{\prime})$}
		\\
		\bm{B}^{\prime} \GETS \bm{B}
		\\
		\COMMENT{Sparsify row $p$ in $\bm{B}^{\prime}$ with $b_{pq}^{\prime}$ fixed}
		\\
		\FOREACH j\ne q \ADO b_{pj}^{\prime} \GETS \mathbb{0}
			\\	
			\COMMENT{Sparsify rows $i>p$ in $\bm{B}^{\prime}$}
			\\
			\FOR i \GETS p+1 \TO m \DO
			\BEGIN
				\text{flag} \GETS \TRUE
				\\
				\FOREACH j\ne q \DO
					\BEGIN
					\IF b_{iq}b_{pq}^{-1}(a_{pj}\oplus b_{pj})<a_{ij} \THEN
						\BEGIN
							\text{flag} \GETS \FALSE
							\\
							\BREAK
						\END
					\END	
					\\
				\IF \text{flag} \THEN
				\FOREACH j\ne q \ADO b_{ij}^{\prime} \GETS \mathbb{0}
				\ELSE
				\BEGIN
					\FOREACH j\ne q \DO
						\BEGIN
							\IF b_{iq}b_{pq}^{-1}(a_{pj}\oplus b_{pj})\geq b_{ij} \THEN
							b_{ij}^{\prime} \GETS \mathbb{0}
						\END
						\\
				\END
			\END
			\\
			\IF p=m \THEN
			\BEGIN
				\COMMENT{Store $\bm{B}^{\prime}$ if completed}
				\\
				\mathcal{G} \GETS \mathcal{G}\cup\{\bm{B}^{\prime}\}
			\END
			\ELSE
			\BEGIN
				\COMMENT{Apply recursion otherwise}
				\\
				\FOR j \GETS 1 \TO n \DO
				\CALL{Backtrack}{\bm{B}^{\prime},p+1,j} 
			\END
		\END
	\END
	\ELSE \RETURN{}
\ENDPROCEDURE
\MAIN
\COMMENT{Generate the set $\mathcal{G}$ of sparse matrices from the matrix $\bm{B}$}
\\
\GLOBAL{m,n,\bm{A},\mathcal{G}=\emptyset}
\\
\FOR j \GETS 1 \TO n \DO
	\CALL{Backtrack}{\bm{B},1,j}
\ENDMAIN	
\end{pseudocode}
\end{table}

To describe the row modification routine in the procedure in more detail, suppose there are nonzero entries fixed in rows $i=1,\ldots,p-1$, and the procedure now selects an entry $b_{pq}$ in row $p$. Since this selection implies that $b_{pq}x_{q}$ is assumed to be the maximum term with $b_{pq}>\mathbb{0}$ in the right-hand side of inequality \eqref{I-ap1x1apnxnleqbp1x1bpnxn}, it follows from \eqref{I-bpqxqgeqap1bp1x1apnbpnxn} that the inequality $x_{q}\geq b_{pq}^{-1}(a_{pj}\oplus b_{pj})x_{j}$ is satisfied for all $j=1,\ldots,n$.

We use two criteria to test whether an entry in the matrix $\bm{B}$ can be set to $\mathbb{0}$ in the course of the building of a row-monomial matrix. Consider inequality \eqref{I-ai1xiainxnleqbi1x1binxn} for $i=p+1,\ldots,n$. Provided the condition $b_{iq}b_{pq}^{-1}(a_{pj}\oplus b_{pj})\geq a_{ij}$ holds for all $j$, the term $b_{iq}x_{q}$ alone makes this inequality true because $b_{iq}x_{q}\geq b_{iq}b_{pq}^{-1}(a_{pj}\oplus b_{pj})x_{j}\geq a_{ij}x_{j}$. Observing that the other terms do not contribute to the inequality, the entries $b_{ij}$ can be set to $\mathbb{0}$ for all $j\ne q$ without affecting the solution set under construction.

If the above condition is not valid, verify the condition $b_{iq}b_{pq}^{-1}(a_{pj}\oplus b_{pj})\geq b_{ij}$ for every $j\ne q$. Suppose that the last condition holds for some $j$, and therefore $b_{iq}x_{q}\geq b_{iq}b_{pq}^{-1}(a_{pj}\oplus b_{pj})x_{j}\geq b_{ij}x_{j}$. Since the term $b_{ij}x_{j}$ is now dominated by $b_{iq}x_{q}$, it does not affect the right-hand side of \eqref{I-ai1xiainxnleqbi1x1binxn}, which allows us to set $b_{ij}=\mathbb{0}$.  

Consider the worst case for the brute-force generation of strictly row-monomial matrices, which occurs when the matrix $\bm{B}$ has no zero entries. It is not difficult to see that an application of the first criterion results in only $n$ matrices $\bm{G}$, each formed from $\bm{B}$ by fixing the entries of one column and replacing the other entries by $\mathbb{0}$.

\subsection{Derivation of solution matrix}

The solution matrix $\bm{S}$ is formed by combining all columns of the matrices which generate the members of the solution family. To eliminate linear dependent columns, the procedure examines each new generating matrix as it becomes available. A column of this matrix is accepted to extend the matrix $\bm{S}$ if it is linearly independent of columns in $\bm{S}$ or rejected otherwise.

\section{Numerical examples}
\label{S-NE}

To illustrate the computational technique involved in the solution procedure, we present example problems to solve inequality \eqref{I-AxleqBx} in terms of the $\mathbb{R}_{\max,+}$ semifield.

\begin{example}
\label{X-1}
Suppose that the matrices on the left- and right-hand sides of the two-sided inequality are respectively given by
\begin{equation*}
\bm{A}_{0}
=
\left(
\begin{array}{rrr}
0 & 2 & 3
\\
\mathbb{0} & -1 & 3
\\
3 & 2 & -1
\end{array}
\right),
\qquad
\bm{B}_{0}
=
\left(
\begin{array}{rrr}
2 & -1 & 2
\\
1 & 0 & 2
\\
-1 & 3 & 1
\end{array}
\right),
\end{equation*}
where we use the symbol $\mathbb{0}=-\infty$ to save writing.

To solve the inequality, we first replace the matrices $\bm{A}_{0}$ and $\bm{B}_{0}$ by the refined matrices $\bm{A}$ and $\bm{B}$, and calculate the sum of the refined matrices to obtain
\begin{equation*}
\bm{A}
=
\left(
\begin{array}{ccc}
\mathbb{0} & 2 & 3
\\
\mathbb{0} & \mathbb{0} & 3
\\
3 & \mathbb{0} & \mathbb{0}
\end{array}
\right),
\qquad
\bm{B}
=
\left(
\begin{array}{ccc}
2 & \mathbb{0} & \mathbb{0}
\\
1 & 0 & \mathbb{0}
\\
\mathbb{0} & 3 & 1
\end{array}
\right).
\qquad
\bm{A}\oplus\bm{B}
=
\left(
\begin{array}{ccc}
2 & 2 & 3
\\
1 & 0 & 3
\\
3 & 3 & 1
\end{array}
\right).
\end{equation*}

We start deriving the strictly row-monomial matrices $\bm{G}$ by fixing the nonzero entry $b_{11}$ in the first row of the matrix $\bm{B}$. Next, we apply two criteria to check if any of the nonzero entries in the next rows can be replaced by zero.

Observing that $b_{21}b_{11}^{-1}(a_{13}\oplus b_{13})=2<a_{23}=3$, we see that the first criterion does not allow setting $b_{22}$ to $\mathbb{0}$. At the same time, we have $b_{21}b_{11}^{-1}(a_{12}\oplus b_{12})=1>b_{22}=0$, which means that $b_{22}$ can be set to $\mathbb{0}$ according to the second criterion.

Finally, two nonzero entries in the third row yield two matrices
\begin{equation*}
\bm{G}_{1}
=
\left(
\begin{array}{ccc}
2 & \mathbb{0} & \mathbb{0}
\\
1 & \mathbb{0} & \mathbb{0}
\\
\mathbb{0} & 3 & \mathbb{0}
\end{array}
\right),
\qquad
\bm{G}_{2}
=
\left(
\begin{array}{ccc}
2 & \mathbb{0} & \mathbb{0}
\\
1 & \mathbb{0} & \mathbb{0}
\\
\mathbb{0} & \mathbb{0} & 1
\end{array}
\right).
\end{equation*}  

Furthermore, we form the multiplicative inverse transposes
\begin{equation*}
\bm{G}_{1}^{-}
=
\left(
\begin{array}{rrr}
-2 & -1 & \mathbb{0}
\\
\mathbb{0} & \mathbb{0} & -3
\\
\mathbb{0} & \mathbb{0} & \mathbb{0}
\end{array}
\right),
\qquad
\bm{G}_{2}^{-}
=
\left(
\begin{array}{rrr}
-2 & -1 & \mathbb{0}
\\
\mathbb{0} & \mathbb{0} & \mathbb{0}
\\
\mathbb{0} & \mathbb{0} & -1
\end{array}
\right),
\end{equation*}
and then calculate the matrices
\begin{equation*}
\bm{H}_{1}
=
\bm{G}_{1}^{-}(\bm{A}\oplus\bm{B})
=
\left(
\begin{array}{rrr}
0 & 0 & 2
\\
0 & 0 & -2
\\
\mathbb{0} & \mathbb{0} & \mathbb{0}
\end{array}
\right),
\qquad
\bm{H}_{2}
=
\bm{G}_{2}^{-}(\bm{A}\oplus\bm{B})
=
\left(
\begin{array}{ccc}
0 & 0 & 2
\\
\mathbb{0} & \mathbb{0} & \mathbb{0}
\\
2 & 2 & 0
\end{array}
\right).
\end{equation*}

Next, we evaluate the second and third powers of the matrix $\bm{H}_{1}$ to obtain
\begin{equation*}
\bm{H}_{1}^{2}
=
\bm{H}_{1}^{3}
=
\left(
\begin{array}{ccc}
0 & 0 & 2
\\
0 & 0 & 2
\\
\mathbb{0} & \mathbb{0} & \mathbb{0}
\end{array}
\right),
\qquad
\mathop\mathrm{tr}\bm{H}_{1}^{3}
=
0
=
\mathbb{1},
\end{equation*}
which means that the matrix $\bm{H}_{1}$ satisfies the conditions of Theorem~\ref{I-AxleqBx}.

In contrast, the matrix $\bm{H}_{2}$ does not satisfy the conditions because
\begin{equation*}
\bm{H}_{2}^{2}
=
\left(
\begin{array}{ccc}
4 & 4 & 2
\\
\mathbb{0} & \mathbb{0} & \mathbb{0}
\\
2 & 2 & 4
\end{array}
\right),
\qquad
\bm{H}_{2}^{3}
=
\left(
\begin{array}{ccc}
4 & 4 & 6
\\
\mathbb{0} & \mathbb{0} & \mathbb{0}
\\
6 & 6 & 4
\end{array}
\right),
\qquad
\mathop\mathrm{tr}\bm{H}_{2}^{3}
=
4
>
\mathbb{1}.
\end{equation*}

As a result, we take the matrix $\bm{H}_{1}$ to form the generating matrix
\begin{equation*}
\bm{I}\oplus\bm{H}_{1}^{2}
=
\left(
\begin{array}{ccc}
0 & 0 & 2
\\
0 & 0 & 2
\\
\mathbb{0} & \mathbb{0} & 0
\end{array}
\right).
\end{equation*}

Since the first two columns in this matrix coincide, we drop one of them to represent all regular solutions of the two-sided inequality as
\begin{equation*}
\bm{x}
=
\bm{S}\bm{v},
\qquad
\bm{S}
=
\left(
\begin{array}{cc}
0 & 2
\\
0 & 2
\\
\mathbb{0} & 0
\end{array}
\right),
\qquad
\bm{v}
>
\bm{0}.
\end{equation*}

In terms of conventional algebra, the solution takes the parametric form
\begin{equation*} 
x_{1}
=
x_{2}
=
\max(v_{1},v_{2}+2),
\qquad
x_{3}
=
v_{2},
\qquad
v_{1},v_{2}
\in\mathbb{R},
\end{equation*} 
or the equivalent compact form
\begin{equation*} 
x_{1}
=
x_{2}
\geq
x_{3}+2.
\end{equation*} 

Note that we can verify the obtained result by solving the problem from scratch. In the usual setting, inequality \eqref{I-AxleqBx} corresponds to the system
\begin{align*}
\max\{x_{2}+2,x_{3}+3\}
&\leq
x_{1}+2,
\\
x_{3}+3
&\leq
\max\{x_{1}+1,x_{2}\},
\\
x_{1}+3
&\leq
\max\{x_{2}+3,x_{3}+1\}.
\end{align*}

The first inequality is equivalent to the pair of inequalities $x_{2}+2\leq x_{1}+2$ and $x_{3}+3\leq x_{1}+2$, which yield $x_{2}\leq x_{1}$ and $x_{3}+1\leq x_{1}$. Since $x_{2}\leq x_{1}$, the right-hand side of the second inequality becomes $\max\{x_{1}+1,x_{2}\}=x_{1}+1$, whereas the inequality itself reduces to $x_{3}+2\leq x_{1}$, which replaces the inequality $x_{3}+1\leq x_{1}$.

In the third inequality, the condition $x_{2}\leq x_{1}$ makes the term $x_{2}+3$ under the $\max$ operator less than the left-hand side of this inequality. If the third inequality holds, then this term cannot contribute to the right-hand side, and thus can be removed to put the third inequality in the form $x_{1}+3\leq x_{2}+3$ or equivalently, in $x_{1}\leq x_{2}$. By combining the last inequality with the inequalities $x_{2}\leq x_{1}$ and $x_{3}+2\leq x_{1}$, we come back to the solution $x_{1}=x_{2}\geq x_{3}+2$.
\end{example}

\begin{example}
Consider the inequality examined in \cite{Lorenzo2011Algorithm,Sergeev2011Basic} with the refined matrices 
\begin{equation*}
\bm{A}
=
\left(
\begin{array}{ccccccc}
\mathbb{0} & \mathbb{0} & \mathbb{0} & 0 & 4 & 2 & 6
\\
\mathbb{0} & 5 & 6 & \mathbb{0} & \mathbb{0} & \mathbb{0} & 2
\end{array}
\right),
\qquad
\bm{B}
=
\left(
\begin{array}{ccccccc}
0 & 1 & 5 & \mathbb{0} & \mathbb{0} & \mathbb{0} & \mathbb{0}
\\
3 & \mathbb{0} & \mathbb{0} & 0 & 2 & 4 & \mathbb{0}
\end{array}
\right).
\end{equation*}

To solve the problem, we first construct a set of strictly row-monomial matrices $\bm{G}$ by selecting appropriate nonzero entries in the matrix $\bm{B}$ to fix and setting the other entries to $\mathbb{0}$. Then, we test each matrix $\bm{G}$ to remove those which do not satisfy the condition given by Theorem~\ref{T-AxleqBx} for inclusion of the matrix into the set $\mathcal{G}$ of proper matrices. Finally, we use the matrices $\bm{G}\in\mathcal{G}$ to calculate the generating matrices, and combine the columns in these matrices to produce a single generating matrix.

In the first row of the matrix $\bm{B}$, we fix the entry $b_{11}=0$ and set the other nonzero entries to $\mathbb{0}$. Then, we verify whether nonzero entries other than $b_{21}$ in the second row can be replaced by $\mathbb{0}$. We begin with the first criterion which requires the condition $b_{21}b_{11}^{-1}(a_{1j}\oplus b_{1j})\geq a_{2j}$ to hold for all $j>1$. With $j=2$, we have $b_{21}b_{11}^{-1}(a_{12}\oplus b_{12})=4$ and $a_{22}=5$, which makes this condition unsatisfied. 

Next, we apply the second criterion to check the condition $b_{21}b_{11}^{-1}(a_{1j}\oplus b_{1j})\geq b_{2j}$. Since $b_{21}b_{11}^{-1}(a_{14}\oplus b_{14})=3\geq b_{24}=0$, we set $b_{24}=\mathbb{0}$ according to this criterion. In the same way, we put $b_{25}=b_{26}=\mathbb{0}$, which leads to a single matrix
\begin{equation*}
\bm{G}_{1}
=
\left(
\begin{array}{ccccccc}
0 & \mathbb{0} & \mathbb{0} & \mathbb{0} & \mathbb{0} & \mathbb{0} & \mathbb{0}
\\
3 & \mathbb{0} & \mathbb{0} & \mathbb{0} & \mathbb{0} & \mathbb{0} & \mathbb{0}
\end{array}
\right).
\end{equation*}

Let us fix the next entry $b_{12}=1$. Since $b_{22}=\mathbb{0}$, we have $b_{22}b_{12}^{-1}(a_{1j}\oplus b_{1j})=\mathbb{0}$ for all $j$, and thus both conditions of the first and second criteria cannot hold for nonzero entries of the second row in $\bm{B}$ to allow setting them to $\mathbb{0}$.

In this case, we have four matrices, each corresponding to a nonzero entry in the second row of $\bm{B}$, given by
\begin{gather*}
\bm{G}_{2}
=
\left(
\begin{array}{ccccccc}
\mathbb{0} & 1 & \mathbb{0} & \mathbb{0} & \mathbb{0} & \mathbb{0} & \mathbb{0}
\\
3 & \mathbb{0} & \mathbb{0} & \mathbb{0} & \mathbb{0} & \mathbb{0} & \mathbb{0}
\end{array}
\right),
\qquad
\bm{G}_{3}
=
\left(
\begin{array}{ccccccc}
\mathbb{0} & 1 & \mathbb{0} & \mathbb{0} & \mathbb{0} & \mathbb{0} & \mathbb{0}
\\
\mathbb{0} & \mathbb{0} & \mathbb{0} & 0 & \mathbb{0} & \mathbb{0} & \mathbb{0}
\end{array}
\right),
\\
\bm{G}_{4}
=
\left(
\begin{array}{ccccccc}
\mathbb{0} & 1 & \mathbb{0} & \mathbb{0} & \mathbb{0} & \mathbb{0} & \mathbb{0}
\\
\mathbb{0} & \mathbb{0} & \mathbb{0} & \mathbb{0} & 2 & \mathbb{0} & \mathbb{0}
\end{array}
\right),
\qquad
\bm{G}_{5}
=
\left(
\begin{array}{ccccccc}
\mathbb{0} & 1 & \mathbb{0} & \mathbb{0} & \mathbb{0} & \mathbb{0} & \mathbb{0}
\\
\mathbb{0} & \mathbb{0} & \mathbb{0} & \mathbb{0} & \mathbb{0} & 4 & \mathbb{0}
\end{array}
\right).
\end{gather*}

For the same reason, the selection of the entry $b_{13}=5$ does not reduce the number of nonzero entries in the second row of $\bm{B}$ and yields another four matrices
\begin{gather*}
\bm{G}_{6}
=
\left(
\begin{array}{ccccccc}
\mathbb{0} & \mathbb{0} & 5 & \mathbb{0} & \mathbb{0} & \mathbb{0} & \mathbb{0}
\\
3 & \mathbb{0} & \mathbb{0} & \mathbb{0} & \mathbb{0} & \mathbb{0} & \mathbb{0}
\end{array}
\right),
\qquad
\bm{G}_{7}
=
\left(
\begin{array}{ccccccc}
\mathbb{0} & \mathbb{0} & 5 & \mathbb{0} & \mathbb{0} & \mathbb{0} & \mathbb{0}
\\
\mathbb{0} & \mathbb{0} & \mathbb{0} & 0 & \mathbb{0} & \mathbb{0} & \mathbb{0}
\end{array}
\right),
\\
\bm{G}_{8}
=
\left(
\begin{array}{ccccccc}
\mathbb{0} & \mathbb{0} & 5 & \mathbb{0} & \mathbb{0} & \mathbb{0} & \mathbb{0}
\\
\mathbb{0} & \mathbb{0} & \mathbb{0} & \mathbb{0} & 2 & \mathbb{0} & \mathbb{0}
\end{array}
\right),
\qquad
\bm{G}_{9}
=
\left(
\begin{array}{ccccccc}
\mathbb{0} & \mathbb{0} & 5 & \mathbb{0} & \mathbb{0} & \mathbb{0} & \mathbb{0}
\\
\mathbb{0} & \mathbb{0} & \mathbb{0} & \mathbb{0} & \mathbb{0} & 4 & \mathbb{0}
\end{array}
\right).
\end{gather*}

We now form the matrices $\bm{H}_{i}=\bm{G}_{i}^{-}(\bm{A}\oplus\bm{B})$ and examine their traces to decide whether the matrix $\bm{G}_{i}$ is accepted or rejected for each $i=1,\ldots,9$. Since the trace is invariant under cyclic permutations, we replace the traces for $\bm{H}_{i}$ by those of the matrices $\bm{F}_{i}=(\bm{A}\oplus\bm{B})\bm{G}_{i}^{-}$ which have a lower order to simplify calculations.


First, we take the matrix $\bm{G}_{1}$ and calculate
\begin{equation*}
\bm{G}_{1}^{-}
=
\left(
\begin{array}{cr}
0 & -3
\\
\mathbb{0} & \mathbb{0}
\\
\mathbb{0} & \mathbb{0}
\\
\mathbb{0} & \mathbb{0}
\\
\mathbb{0} & \mathbb{0}
\\
\mathbb{0} & \mathbb{0}
\\
\mathbb{0} & \mathbb{0}
\end{array}
\right),
\qquad
\bm{A}\oplus\bm{B}
=
\left(
\begin{array}{ccccccc}
0 & 1 & 5 & 0 & 4 & 2 & 6
\\
3 & 5 & 6 & 0 & 2 & 4 & 2
\end{array}
\right).
\end{equation*}

Then, we successively obtain
\begin{equation*}
\bm{F}_{1}
=
(\bm{A}\oplus\bm{B})\bm{G}_{1}^{-}
=
\left(
\begin{array}{cr}
0 & -3
\\
3 & 0
\end{array}
\right),
\qquad
\bm{F}_{1}^{k}
=
\bm{F}_{1},
\quad
k\geq1;
\qquad
\mathop\mathrm{tr}\bm{F}_{1}
=
0.
\end{equation*} 

Observing that $\mathop\mathrm{tr}\bm{H}_{1}^{7}=\mathop\mathrm{tr}\bm{F}_{1}^{7}=0=\mathbb{1}$, we retain the matrix $\bm{G}_{1}$ to form the generating matrix
\begin{equation*}
\bm{S}_{1}
=
\bm{I}\oplus\bm{H}_{1}^{6}
=
\bm{I}
\oplus
\bm{G}_{1}^{-}\bm{F}_{1}^{5}(\bm{A}\oplus\bm{B})
=
\left(
\begin{array}{ccccccc}
0 & 2 & 5 & 0 & 4 & 2 & 6
\\
\mathbb{0} & 0 & \mathbb{0} & \mathbb{0} & \mathbb{0} & \mathbb{0} & \mathbb{0}
\\
\mathbb{0} & \mathbb{0} & 0 & \mathbb{0} & \mathbb{0} & \mathbb{0} & \mathbb{0}
\\
\mathbb{0} & \mathbb{0} & \mathbb{0} & 0 & \mathbb{0} & \mathbb{0} & \mathbb{0}
\\
\mathbb{0} & \mathbb{0} & \mathbb{0} & \mathbb{0} & 0 & \mathbb{0} & \mathbb{0}
\\
\mathbb{0} & \mathbb{0} & \mathbb{0} & \mathbb{0} & \mathbb{0} & 0 & \mathbb{0}
\\
\mathbb{0} & \mathbb{0} & \mathbb{0} & \mathbb{0} & \mathbb{0} & \mathbb{0} & 0
\end{array}
\right).
\end{equation*} 

Next, we examine the matrix $\bm{G}_{2}$ to obtain
\begin{equation*}
\bm{F}_{2}
=
(\bm{A}\oplus\bm{B})\bm{G}_{2}^{-}
=
\left(
\begin{array}{cr}
0 & -3
\\
4 & 0
\end{array}
\right),
\qquad
\bm{F}_{2}^{2}
=
\left(
\begin{array}{cr}
1 & -3
\\
4 & 1
\end{array}
\right),
\qquad
\mathop\mathrm{tr}\bm{F}_{2}^{2}
=
1.
\end{equation*}

Since $\mathop\mathrm{tr}\bm{H}_{2}^{7}=\mathop\mathrm{tr}\bm{F}_{2}^{7}\geq\mathop\mathrm{tr}\bm{F}_{2}^{2}=1>0=\mathbb{1}$, the matrix $\bm{G}_{2}$ is rejected. 

In the same way, we find
\begin{align*}
\bm{F}_{3}
&=
\left(
\begin{array}{cc}
0 & 0
\\
4 & 0
\end{array}
\right),
&
\bm{F}_{3}^{2}
&=
\left(
\begin{array}{cc}
4 & 0
\\
4 & 4
\end{array}
\right),
&
\mathop\mathrm{tr}\bm{F}_{3}^{2}
&=
4,
\\
\bm{F}_{4}
&=
\left(
\begin{array}{cc}
0 & 2
\\
4 & 0
\end{array}
\right),
&
\bm{F}_{4}^{2}
&=
\left(
\begin{array}{cc}
6 & 2
\\
4 & 6
\end{array}
\right),
&
\mathop\mathrm{tr}\bm{F}_{4}^{2}
&=
6,
\\
\bm{F}_{5}
&=
\left(
\begin{array}{cr}
0 & -2
\\
4 & 0
\end{array}
\right),
&
\bm{F}_{5}^{2}
&=
\left(
\begin{array}{cr}
2 & -2
\\
4 & 2
\end{array}
\right),
&
\mathop\mathrm{tr}\bm{F}_{5}^{2}
&=
2,
\end{align*}
and then conclude that the matrices $\bm{G}_{3}$, $\bm{G}_{4}$ and $\bm{G}_{5}$ are to be rejected as well.

Furthermore, for the matrix $\bm{G}_{6}$ we have 
\begin{equation*}
\bm{F}_{6}
=
\left(
\begin{array}{cr}
0 & -3
\\
1 & 0
\end{array}
\right),
\qquad
\bm{F}_{6}^{k}
=
\bm{F}_{6},
\quad
k\geq1,
\qquad
\mathop\mathrm{tr}\bm{F}_{6}
=
0.
\end{equation*}
 
Taking into account that $\mathop\mathrm{tr}\bm{H}_{6}^{7}=\mathop\mathrm{tr}\bm{F}_{6}^{7}=0$, we accept the matrix $\bm{G}_{6}$, which yields the generating matrix
\begin{equation*}
\bm{S}_{6}
=
\bm{I}\oplus\bm{H}_{6}^{6}
=
\bm{I}
\oplus
\bm{G}_{6}^{-}\bm{F}_{6}^{6}(\bm{A}\oplus\bm{B})
=
\left(
\begin{array}{rrcrrrc}
0 & 2 & 3 & -2 & 2 & 1 & 4
\\
\mathbb{0} & 0 & \mathbb{0} & \mathbb{0} & \mathbb{0} & \mathbb{0} & \mathbb{0}
\\
-5 & -3 & 0 & -5 & -1 & -3 & 1
\\
\mathbb{0} & \mathbb{0} & \mathbb{0} & 0 & \mathbb{0} & \mathbb{0} & \mathbb{0}
\\
\mathbb{0} & \mathbb{0} & \mathbb{0} & \mathbb{0} & 0 & \mathbb{0} & \mathbb{0}
\\
\mathbb{0} & \mathbb{0} & \mathbb{0} & \mathbb{0} & \mathbb{0} & 0 & \mathbb{0}
\\
\mathbb{0} & \mathbb{0} & \mathbb{0} & \mathbb{0} & \mathbb{0} & \mathbb{0} & 0
\end{array}
\right).
\end{equation*} 

The matrices $\bm{G}_{7}$ and $\bm{G}_{8}$ are rejected because
\begin{align*}
\bm{F}_{7}
&=
\left(
\begin{array}{cc}
0 & 0
\\
1 & 0
\end{array}
\right),
&
\bm{F}_{7}^{2}
&=
\left(
\begin{array}{cc}
1 & 0
\\
1 & 1
\end{array}
\right),
&
\mathop\mathrm{tr}\bm{F}_{7}^{2}
&=
1,
\\
\bm{F}_{8}
&=
\left(
\begin{array}{cr}
0 & 2
\\
1 & 0
\end{array}
\right),
&
\bm{F}_{8}^{2}
&=
\left(
\begin{array}{cc}
3 & 2
\\
1 & 3
\end{array}
\right),
&
\mathop\mathrm{tr}\bm{F}_{8}^{2}
&=
3.
\end{align*}

Finally, to decide on the matrix $\bm{G}_{9}$, we obtain 
\begin{equation*}
\bm{F}_{9}
=
\left(
\begin{array}{cr}
0 & -2
\\
1 & 0
\end{array}
\right),
\qquad
\bm{F}_{9}^{k}
=
\bm{F}_{9},
\quad
k\geq1,
\qquad
\mathop\mathrm{tr}\bm{H}_{9}
=
0.
\end{equation*}

As $\mathop\mathrm{tr}\bm{H}_{9}^{7}=0=\mathbb{1}$, we take $\bm{G}_{9}$ to produce the generating matrix
\begin{equation*}
\bm{S}_{9}
=
\bm{I}\oplus\bm{H}_{9}^{6}
=
\bm{I}
\oplus
\bm{G}_{9}^{-}\bm{F}_{9}^{6}(\bm{A}\oplus\bm{B})
=
\left(
\begin{array}{rrcrrrc}
0 & \mathbb{0} & \mathbb{0} & \mathbb{0} & \mathbb{0} & \mathbb{0} & \mathbb{0}
\\
\mathbb{0} & 0 & \mathbb{0} & \mathbb{0} & \mathbb{0} & \mathbb{0} & \mathbb{0}
\\
-4 & -2 & 0 & -5 & -1 & -3 & 1
\\
\mathbb{0} & \mathbb{0} & \mathbb{0} & 0 & \mathbb{0} & \mathbb{0} & \mathbb{0}
\\
\mathbb{0} & \mathbb{0} & \mathbb{0} & \mathbb{0} & 0 & \mathbb{0} & \mathbb{0}
\\
-1 & 1 & 2 & -3 & 1 & 0 & 3
\\
\mathbb{0} & \mathbb{0} & \mathbb{0} & \mathbb{0} & \mathbb{0} & \mathbb{0} & 0
\end{array}
\right).
\end{equation*} 

Now, it remains to combine the columns of the matrices $\bm{S}_{1}$, $\bm{S}_{6}$ and $\bm{S}_{9}$ into one set of columns, and then refine this set by removing those columns which are linearly dependent on others. First, we take the columns from $\bm{S}_{1}$ and note that they constitute a linearly independent system since each column starting from the second has a nonzero element which is the only nonzero element in its row.

Next, we find columns in $\bm{S}_{6}$ that are independent of columns in $\bm{S}_{1}$, and thus have to be included in the set of generators. To apply the criterion of Lemma~\ref{L-AbAb1} to all columns of $\bm{S}_{6}$ simultaneously, we evaluate the matrix  
\begin{equation*}
(\bm{S}_{1}(\bm{S}_{6}^{-}\bm{S}_{1})^{-})^{-}\bm{S}_{6}
=
\left(
\begin{array}{rrcrcrc}
0 & 2 & 5 & 0 & 4 & 2 & 6
\\
-2 & 0 & 3 & -2 & 2 & 0 & 4
\\
-3 & -1 & 2 & -3 & 1 & -1 & 3
\\
2 & 4 & 7 & 2 & 6 & 4 & 8
\\
-2 & 0 & 3 & -2 & 2 & 0 & 4
\\
-1 & 1 & 4 & -1 & 3 & 1 & 5
\\
-4 & -2 & 1 & -4 & 0 & -2 & 2
\end{array}
\right),
\end{equation*}
and inspect its diagonal for the entries equal to $0=\mathbb{1}$.

Since the first two diagonal elements are zeros, we drop the corresponding two columns of $\bm{S}_{6}$. Adding the other columns to the matrix $\bm{S}_{1}$ yields the matrix
\begin{equation*}
\bm{S}_{1,6}
=
\left(
\begin{array}{ccccccccrrrc}
0 & 2 & 5 & 0 & 4 & 2 & 6 & 3 & -2 & 2 & 1 & 4
\\
\mathbb{0} & 0 & \mathbb{0} & \mathbb{0} & \mathbb{0} & \mathbb{0} & \mathbb{0} & \mathbb{0} & \mathbb{0} & \mathbb{0} & \mathbb{0} & \mathbb{0}
\\
\mathbb{0} & \mathbb{0} & 0 & \mathbb{0} & \mathbb{0} & \mathbb{0} & \mathbb{0} & 0 & -5 & -1 & -3 & 1
\\
\mathbb{0} & \mathbb{0} & \mathbb{0} & 0 & \mathbb{0} & \mathbb{0} & \mathbb{0} & \mathbb{0} & 0 & \mathbb{0} & \mathbb{0} & \mathbb{0}
\\
\mathbb{0} & \mathbb{0} & \mathbb{0} & \mathbb{0} & 0 & \mathbb{0} & \mathbb{0} & \mathbb{0} & \mathbb{0} & 0 & \mathbb{0} & \mathbb{0}
\\
\mathbb{0} & \mathbb{0} & \mathbb{0} & \mathbb{0} & \mathbb{0} & 0 & \mathbb{0} & \mathbb{0} & \mathbb{0} & \mathbb{0} & 0 & \mathbb{0}
\\
\mathbb{0} & \mathbb{0} & \mathbb{0} & \mathbb{0} & \mathbb{0} & \mathbb{0} & 0 & \mathbb{0} & \mathbb{0} & \mathbb{0} & \mathbb{0} & 0
\end{array}
\right).
\end{equation*}

We examine the columns of the matrix $\bm{S}_{9}$ and calculate the matrix
\begin{equation*}
(\bm{S}_{1,6}(\bm{S}_{9}^{-}\bm{S}_{1,6})^{-})^{-}\bm{S}_{9}
=
\left(
\begin{array}{rrrrrrc}
0 & 2 & 4 & -1 & 3 & 1 & 5
\\
-2 & 0 & 2 & -3 & 1 & -1 & 3
\\
-3 & -1 & 0 & -5 & -1 & -2 & 1
\\
2 & 4 & 5 & 0 & 4 & 3 & 6
\\
-2 & 0 & 1 & -4 & 0 & -1 & 2
\\
-1 & 1 & 3 & -2 & 2 & 0 & 4
\\
-4 & -2 & -1 & -6 & -2 & -3 & 0
\end{array}
\right).
\end{equation*}

Observing that all diagonal entries are equal to $0=\mathbb{1}$, all columns in the matrix $\bm{S}_{9}$ can be dropped. As a result, we set $\bm{S}=\bm{S}_{1,6}$ to describe all regular solutions of the inequality in the form
\begin{equation*}
\bm{x}
=
\bm{S}\bm{u},
\qquad
\bm{u}>\bm{0}.
\end{equation*}
\end{example}

Finally, note that the number of columns in the generating matrix $\bm{S}$ is $12$, which is less than the number of elements in the sets of generating vectors, obtained in \cite{Wagneur2009Tropical,Sergeev2011Basic}. At the same time, it is not difficult to verify using the criterion of Lemma~\ref{L-AbAb1} that the vectors in both these sets are linearly dependent on columns in the matrix $\bm{S}$, and thus cannot generate solutions other than those already produced by $\bm{S}$.

\section{Conclusion}

In this paper, we have examined a two-sided inequality (where the unknown vector multiplied by given matrices appears on both sides) in a general setting of an arbitrary idempotent semifield. There are some algorithmic techniques and computational procedures developed in the last years to solve the inequality, which, however, cannot guarantee polynomial-time complexity of the solution. This makes it rather expedient and advisable to develop new methods and techniques that could complement and supplement existing solutions.

To solve the inequality, we used an approach that transforms it into a collection of more simple inequalities with matrices obtained by sparsification of a given matrix. These inequalities are solved analytically in explicit form, which yields a complete solution of the two-sided inequality given by a family of sets, each defined in a parametric form by a generating matrix. Since in practical problems the number of sparse matrices to define the members of the family can be sufficiently large, we have proposed a backtracking procedure which discards matrices that do not produce solutions, and hence reduces the computational cost. 

Possible lines of future research include a thorough evaluation of the computational complexity of the solution and an extension of the results to solve two-sided equations.

\subsection*{Acknowledgments}

The author sincerely thanks the anonymous referee for the insightful comments, valuable suggestions and corrections, which have been incorporated in the revised paper. He is especially grateful for providing a list of important references and for giving a simple solution for Example~\ref{X-1} from scratch.

\bibliographystyle{abbrvurl}

\bibliography{Complete_solution_of_tropical_vector_inequalities_using_matrix_sparsification}

\end{document}